\documentclass[11pt]{article}
\usepackage[a4paper,margin=27mm]{geometry}
\usepackage{amsmath,amssymb,amsthm,mathtools}
\usepackage[T1]{fontenc}
\usepackage{lmodern,microtype}
\usepackage[colorlinks=true,linkcolor=blue,citecolor=blue,urlcolor=blue]{hyperref}
\usepackage{enumitem}
\allowdisplaybreaks[1]
\numberwithin{equation}{section}
\newtheorem{theorem}{Theorem}[section]
\newtheorem{proposition}[theorem]{Proposition}
\newtheorem{lemma}[theorem]{Lemma}
\newtheorem{corollary}[theorem]{Corollary}
\theoremstyle{definition}\newtheorem{definition}[theorem]{Definition}
\theoremstyle{remark}\newtheorem{remark}[theorem]{Remark}
\newcommand{\D}{\mathcal O}
\newcommand{\E}{\mathbb E}
\newcommand{\Prob}{\mathbb P}
\newcommand{\R}{\mathbb R}
\newcommand{\ind}{\mathbf 1}

\newcommand{\dd}{\,\mathrm d}
\DeclareMathOperator{\diver}{div}

\begin{document}
\title{Global Well-posedness and Asymptotic Extinction for Keller--Segel Systems with Superlinear Noise}
\author{Yunfeng Chen$^{1}$,
	Tusheng Zhang$^{2,3}$}
\footnotetext[1]{\, School of Mathematics, Hefei University of Technology, Hefei, China. Corresponding author: Yunfeng Chen. Email: \href{mailto:yfchen@hfut.edu.cn}{yfchen@hfut.edu.cn}.}
\footnotetext[2]{\, School of Mathematics, University of Science and Technology of China, Hefei, China.}
\footnotetext[3]{\, Department of Mathematics, University of Manchester, Manchester M13 9PL, United Kingdom. Email: \href{mailto:tusheng.zhang@manchester.ac.uk}{tusheng.zhang@manchester.ac.uk} }
\date{}

\maketitle

\begin{abstract}
It is well known that solutions to deterministic Keller--Segel
chemotaxis systems can blow up in finite time under certain conditions.
To investigate whether noise can prevent such blow-up, we study
parabolic--elliptic and fully parabolic Keller--Segel systems on
smooth bounded domains with pointwise superlinear multiplicative
It\^o noise $\sigma_0u^r\,\mathrm dW_t$, where $W$ is a real
Brownian motion. Our results show that superlinear multiplicative noise can prevent finite-time blow-up in Keller--Segel systems.  For $r>3/2$, we prove global existence and pathwise uniqueness
of nonnegative mild solutions in arbitrary spatial dimension,
for every $\sigma_0>0$. At $r=3/2$, both systems are globally
well-posed in two dimensions for every positive noise intensity;
in arbitrary dimension, the parabolic--elliptic system is global under
$\sigma_0^2\ge2\chi$.  For $r>3/2$ in the fully parabolic case,
the proof combines a logarithmic estimate for the time derivative
of the chemical signal with comparison of the local solution
to the solution of a scalar stochastic differential equation.
The two-dimensional case follows from mass quadratic variation,
deterministic maximal regularity, and localized energy estimates.
We also establish almost-sure $L^1$ extinction and algebraic
decay of fractional mass moments.
\end{abstract}
\begin{samepage}
\noindent\textbf{Keywords.} Keller--Segel system, superlinear multiplicative noise, global well-posedness, stochastic comparison, mass extinction, algebraic decay.

\smallskip
\noindent\textbf{2020 Mathematics Subject Classification.} 60H15, 35R60, 35K55, 92C17.
\end{samepage}

\section{Introduction}\label{sec:intro}

The Keller--Segel system describes the competition between diffusion
and aggregation along a chemical gradient \cite{KS,Horstmann}.
This competition can lead to global regularity or finite-time blow-up,
depending on the model, the spatial dimension, and the initial data.
In two dimensions, Nagai, Senba, and Yoshida \cite{NSY} established
global existence and boundedness under a smallness condition on the
initial mass for a fully parabolic system on bounded domains.
For the parabolic--elliptic Keller--Segel system in $\R^2$ with
Poisson coupling $-\Delta v=u$, Blanchet, Dolbeault, and Perthame
\cite{BDP} established the critical-mass distinction between global
existence and finite-time blow-up under appropriate assumptions on
the initial data.
In higher dimensions, small-data global boundedness for the fully
parabolic system was proved by Winkler \cite{WinklerSmall}, while
finite-time blow-up was established for suitable radially symmetric
data on balls in \cite{WinklerBlow}.
Logistic damping provides an additional mechanism for controlling
aggregation under suitable assumptions \cite{TelloWinkler,Winkler}. Here we investigate global solvability through a stochastic
perturbation of the density equation, without a deterministic source
or a smallness condition on the initial data.

More precisely,  we consider the following system on a smooth bounded domain
$\D\subset\R^d$, with homogeneous Neumann boundary conditions:
\begin{equation}\label{eq:model}
	\begin{cases}
		\mathrm du=[\Delta u-\chi\diver(u\nabla v)]\,\mathrm dt
		+\sigma_0u^r\,\mathrm dW_t,
		&x\in\D,\ t>0,\\
		\tau\partial_t v=\Delta v-v+u,
		&x\in\D,\ t>0,\\
		\partial_\nu u=\partial_\nu v=0,
		&x\in\partial\D,\ t>0.
	\end{cases}
\end{equation}
Here $\chi>0$, $\sigma_0>0$, $r>1$, and either $\tau=0$ or
$\tau>0$.
The functions $u$ and $v$ denote the cell density and the
chemoattractant concentration, respectively.
The stochastic integral is interpreted in the It\^o sense, and
$W$ is a real-valued standard Brownian motion.

For every fixed finite spatial dimension, we obtain
global existence and pathwise uniqueness of nonnegative mild
solutions for both the parabolic--elliptic ($\tau=0$) and fully parabolic
systems ($\tau>0$) when $r>3/2$ and $\sigma_0>0$.
At $r=3/2$, both systems are globally well-posed in two dimensions
for every $\sigma_0>0$; in arbitrary dimension, the
parabolic--elliptic system is well-posed under the sufficient condition
$\sigma_0^2\ge2\chi$.
For $r>3/2$, we also obtain quantitative finite-time tails for the
spatial supremum and finite fractional moments of its time supremum.

It has been shown in the literature that nonlinear noise can prevent finite-time blow-up
in stochastic evolution equations.
Tang and Wang \cite{TangWang} obtained global well-posedness for stochastic aggregation--diffusion equations on $\mathbb R^d$, $d\ge2$, under suitable conditions on norm-dependent nonlinear noise.
For stochastic chemotaxis on two-dimensional bounded domains,
Chen, Zhai, and Zhang \cite{CZZ} established global well-posedness
results for parabolic--elliptic systems, including noise coefficients
of the form $b_k\|u\|_{L^\ell}^{s}u$.
Wang, Li, and Huang \cite{WLH} treated a fully parabolic Keller--Segel system on a two-dimensional bounded domain with similar norm-dependent noise and proved global existence and pathwise uniqueness without a smallness condition on the initial data, under suitable exponent assumptions. 
Related results for nonlinear It\^o noise in the Euler equations
were obtained by Bagnara, Maurelli, and Xu \cite{BMX}.
In the norm-dependent models described above, the noise coefficients depend on global norms
of the solution and are therefore nonlocal in space.
In the present model, the noise amplitude at each spatial point depends only on the local value of the density.

The crucial idea in this paper is to 
compare the local solution of the  Keller--Segel system with the solution of a scalar stochastic
differential equation driven by the same Brownian motion:
$$\dd U=(\chi U^2+a(t)U)\dd t+\sigma_0U^r\dd W_t.$$
For the parabolic--elliptic Keller--Segel system, the identity
$-\chi\Delta v=\chi u-\chi v$ makes a quadratic scalar drift
sufficient. The fully parabolic coupling produces an additional
term that depends on the unknown solution:
\[
-\chi\Delta v=\chi u-\chi v-\chi\tau v_t,
\qquad
a(t)=\chi\tau\|(v_t(t))_-\|_\infty.
\]
where $z_-:=\max\{-z,0\}$.
Controlling this random reaction coefficient $a(t)$ before global existence
is known is the main difficulty.
We control $a$ by combining polynomial spatial estimates before
the amplitude stopping time with a short-time/long-time
decomposition of the signal convolution, 
which yields an estimate with only a logarithmic loss in the stopping level.

For any global nonnegative solution under the framework of
Section~\ref{sec:setting}, with $r>1$ and $\sigma_0>0$,
the density and signal converge almost surely to zero in $L^1$.
We also prove fractional supremum-moment bounds and quantify the decay:
for every $0<s<1$,
\[
\E\|u(t)\|_1^s+\E\|v(t)\|_1^s
\le C_s(1+t)^{-s/(2r-2)},\qquad t\ge0.
\]

The rest of the paper is organized as follows.
Section~\ref{sec:setting} states the framework and main results.
Section~\ref{sec:local} collects the analytic estimates, local
well-posedness, and continuation tools.
Section~\ref{sec:comparison} develops the scalar comparison.
Section~\ref{sec:parabolic-elliptic} treats the parabolic--elliptic
system, while Section~\ref{sec:global} proves the fully parabolic
result for $r>3/2$.
Section~\ref{sec:critical-2d} establishes the two-dimensional
endpoint result.
Section~\ref{sec:mass-proof} derives quantitative finite-time
bounds and proves asymptotic extinction and algebraic decay
of fractional mass moments.
Appendices~\ref{app:stoch} and~\ref{app:local} contain the
stochastic convolution estimates and the local construction,
respectively.

\section{Main results}\label{sec:setting}
\subsection{Setting and solutions}
Let $d\ge1$ be an arbitrary fixed integer, let $\D\subset\R^d$
be a bounded domain with $C^\infty$ boundary, and let $\nu$ denote
its outward unit normal.
Let $(\Omega,\mathcal F,(\mathcal F_t)_{t\ge0},\Prob)$ be a complete
filtered probability space satisfying the usual conditions, and let
$\{W_t\}_{t\ge0}$ be a real-valued standard Brownian motion with
respect to $(\mathcal F_t)_{t\ge0}$.
All stochastic integrals are understood in the It\^o sense.
We consider system~\eqref{eq:model} with $\chi>0$, $\sigma_0>0$,
$r>1$, and either $\tau=0$ or $\tau>0$.

Let $u_0\in C(\overline\D)$ be deterministic and nonnegative.
When $\tau>0$, let $v_0\in C^2(\overline\D)$ be deterministic,
nonnegative, and satisfy $\partial_\nu v_0=0$.
For $\tau=0$, only $u_0$ is prescribed and
\begin{equation}\label{eq:elliptic-initial}
v(t)=Ku(t),\qquad K=(I-\Delta_N)^{-1};\qquad v(0)=Ku_0.
\end{equation}

Fix an auxiliary exponent \(q>\max\{8,4d\}\), for instance \(q=8(d+1)\). This exponent is used in the local construction and spatial regularity estimates.
Let
\[
S(t)=e^{t\Delta_N},
\qquad
B=\frac{I-\Delta_N}{\tau},
\qquad
S_\tau(t)=e^{-tB},
\quad \tau>0.
\]
Unless otherwise specified, all norms are taken over \(\D\); in particular, \(\|\cdot\|_\infty\) denotes the spatial supremum norm.
We use $B$ and $S_\tau$ for the compatible Neumann realizations on $C(\overline\D)$ and $L^q(\D)$.

\begin{definition}[Local mild solution]\label{def:local-solution}
	Let $\zeta$ be a strictly positive stopping time. A \emph{local mild solution}
	of \eqref{eq:model} with lifetime $\zeta$ is an adapted nonnegative process $u$ such that,
	almost surely,
	\[
	u\in C\bigl([0,\zeta);C(\overline{\D})\bigr),
	\]
	together with the associated signal $v$ given by \eqref{eq:elliptic-initial} if $\tau=0$, or by
	\begin{equation}\label{eq:mild-v}
		v(t)=S_\tau(t)v_0+\frac1\tau\int_0^t S_\tau(t-s)u(s)\dd s
	\end{equation}
	if $\tau>0$. Moreover, there exists a non-decreasing sequence of bounded stopping times
	$(\zeta_n)_{n\ge1}$ such that $\zeta_n<\zeta$ and $\zeta_n\uparrow\zeta$ almost surely as $n\rightarrow \infty$,
	and for every $n\ge1$, the following mild identity holds almost surely for all $0\le t\le\zeta_n$:
	\begin{equation}\label{eq:mild-u}
		u(t)
		=
		S(t)u_0
		-\chi\int_0^t
		S(t-s)\diver\!\bigl(u(s)\nabla v(s)\bigr)\,\dd s
		+\sigma_0\int_0^t
		S(t-s)u(s)^r\,\dd W_s.
	\end{equation}
	
	In particular, the solution is called \emph{global} if $\zeta=\infty$ almost surely.
\end{definition}
The divergence term in \eqref{eq:mild-u} is understood in the weak Neumann
sense. The stochastic convolution is initially defined as an
$L^q(\D)$-valued stochastic integral, using the $L^q(\D)$ realization of the
Neumann heat semigroup, and is then identified with its spatially continuous
modification. Accordingly, \eqref{eq:mild-u} is understood as an identity in
$C(\overline{\D})$ after choosing this modification.

\subsection{Global well-posedness}
\begin{theorem}[Global well-posedness for $r>3/2$]\label{thm:global}
Under the framework above, let $r>3/2$ and $\sigma_0>0$.
For every fixed $d\ge1$ and either $\tau=0$ or $\tau>0$,
system~\eqref{eq:model} has a pathwise unique global nonnegative
mild solution. When $\tau>0$, for every $T<\infty$,
\[
v\in C^1\bigl([0,T];C(\overline\D)\bigr)
\qquad\text{almost surely}.
\]
Fix $\lambda\in(0,1)$ and define
\begin{equation}\label{eq:exponents}
\gamma=2r-2>1,\qquad n=\lambda+\gamma,\qquad
p=\frac{n}{\gamma}=1+\frac\lambda\gamma,\qquad
\nu_* =\frac{n}{\gamma-1}.
\end{equation}
For every $T<\infty$, there is a constant $C_{T,\lambda}<\infty$
such that, for all sufficiently large $R$, the elliptic-signal
solution satisfies
\begin{equation}\label{eq:pe-tail}
\Prob\left(\sup_{0\le t\le T}\|u(t)\|_\infty\ge R\right)
\le\frac{C_{T,\lambda}}{(1+R)^\lambda},\qquad \tau=0,
\end{equation}
whereas the fully parabolic solution satisfies
\begin{equation}\label{eq:pp-tail}
\Prob\left(\sup_{0\le t\le T}\|u(t)\|_\infty\ge R\right)
\le C_{T,\lambda}
\frac{1+(1+\log R)^{\nu_*}}{(1+R)^\lambda},\qquad \tau>0.
\end{equation}
\end{theorem}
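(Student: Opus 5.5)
We will argue by local well-posedness, comparison with a scalar SDE, and a continuation criterion. First we invoke the local theory from Section~\ref{sec:local}, with the construction in Appendix~\ref{app:local}. It gives a pathwise unique maximal nonnegative mild solution with lifetime $\zeta$. It also gives a blow-up alternative: on $\{\zeta<\infty\}$ we have $\limsup_{t\uparrow\zeta}\|u(t)\|_\infty=\infty$. When $\tau>0$, the identity $\tau v_t=\Delta v-v+u$ applied to \eqref{eq:mild-v} gives $v\in C^1([0,T\wedge\zeta_R];C(\overline\D))$, with $v_0\in C^2$ handling $t=0$. Hence global existence and the regularity of $v$ reduce to showing $\Prob(\sup_{t\le T\wedge\zeta_R}\|u\|_\infty\ge R)\to0$ as $R\to\infty$ for every $T$, where $\zeta_R=\inf\{t:\|u(t)\|_\infty\ge R\}$. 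The tail bounds \eqref{eq:pe-tail}--\eqref{eq:pp-tail} give exactly this.

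For the comparison we expand the drift pointwise: $\Delta u-\chi\nabla u\cdot\nabla v-\chi u\Delta v$. Using $-\Delta v=u-v-\tau v_t$ and $v\ge0$, the zeroth-order part satisfies $-\chi u\Delta v\le\chi u^2+\chi\tau\|(v_t)_-\|_\infty u$. A maximum-principle comparison then gives $\|u(t)\|_\infty\le U(t)$ up to $\zeta_R$. Here $U$ solves $\dd U=(\chi U^2+a(t)U)\dd t+\sigma_0U^r\dd W$ with $U(0)=\|u_0\|_\infty$ and $a=\chi\tau\|(v_t)_-\|_\infty$, and $a\equiv0$ when $\tau=0$. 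I would take this comparison from Section~\ref{sec:comparison}; it is carried out for regularized or truncated coefficients and uses the common Brownian motion.

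For the scalar SDE we apply Itô to the Lyapunov function $\phi(U)=U^{\lambda}$ with $\lambda\in(0,1)$. With $\gamma=2r-2$ this gives
\[
\mathcal L\phi=\lambda U^{\lambda}\bigl(\chi U+a-\tfrac{1-\lambda}{2}\sigma_0^2U^{\gamma}\bigr).
\]
Since $\gamma>1$, i.e.\ $r>3/2$, the Itô correction dominates $\chi U$ for large $U$. Hence $\mathcal L\phi\le C+\lambda aU^\lambda-cU^{n}$ with $n=\lambda+\gamma$. In the case $\tau=0$ we have $a=0$. The bound $\E\sup_{t\le T}U^\lambda\le C_T$ then follows via BDG, since the martingale integrand $U^{\lambda-1+r}$ has square $U^{n}$ and so is controlled by the $-cU^n$ term. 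Markov's inequality then gives \eqref{eq:pe-tail}. Uniqueness is pathwise uniqueness of the local solution, which is consistent across stopping times.

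The main obstacle is the fully parabolic term $aU^\lambda$, because $a$ depends on the unknown solution. We plan to absorb it by Young's inequality with the exponent $p=n/\gamma$ appearing in \eqref{eq:exponents}. Since $a\,U^\lambda\le\varepsilon U^{n}+C_\varepsilon a^{n/\gamma}$, this produces the error $\E\int_0^{T\wedge\zeta_R}a^{n/\gamma}\dd t$. To bound $a$ before $\zeta_R$, write $\tau v_t=-Bv+\ldots$ through \eqref{eq:mild-v}. One gets
\[
v_t=-BS_\tau(t)v_0+\tau^{-1}u(t)-\tau^{-1}\int_0^tBS_\tau(t-s)u(s)\dd s.
\]
The term $u/\tau\ge0$ does not contribute to the negative part. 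We split the remaining convolution into $|t-s|<\delta$ and $|t-s|\ge\delta$. The long-time part is bounded by $C\log(1/\delta)$ times a polynomial $L^p$ moment of $u$; this is the polynomial spatial estimate valid before $\zeta_R$. The short-time part uses the Hölder regularity of $u$ on $[0,\zeta_R]$ and the bound $\|u\|_\infty\le R$. Choosing $\delta=R^{-N}$ should give $a\lesssim 1+\log R$ times a random quantity whose moments are controlled. Feeding this into the Lyapunov estimate gives a logarithmic loss $(1+\log R)^{\nu_*}$ with $\nu_*=n/(\gamma-1)$; this exponent should come from balancing the Young exponent against the growth rate. This yields \eqref{eq:pp-tail}, and letting $R\to\infty$ gives $\zeta=\infty$ almost surely. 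I expect the precise short-/long-time split and the sharp logarithmic bookkeeping to be the delicate part.
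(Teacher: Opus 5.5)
Your outline follows the paper's architecture. The shared ingredients are the maximal local solution with its blow-up alternative, the scalar supersolution $\dd U=(\chi U^2+a(t)U)\dd t+\sigma_0U^r\dd W_t$ with $a=\chi\tau\|(v_t)_-\|_\infty$, and a fractional-power Lyapunov function. They also include Young's inequality producing $a^p$ with $p=n/\gamma$, and a short/long-time split of $\int_0^tBS_\tau(t-s)u(s)\dd s$ with a cutoff that is a negative power of the level. The $\tau=0$ case is fine apart from one slip. The martingale integrand $\lambda\sigma_0U^{\lambda-1+r}$ squares to $U^{2\lambda+\gamma}=U^{\lambda+n}$, not $U^n$. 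This can be repaired by absorbing $(\sup U^\lambda)^{1/2}$, but it is unnecessary. Optional stopping at the hitting time $\rho_R$ of level $R$ already gives $(1+R)^\lambda\Prob(\rho_R\le T)\le(1+U_0)^\lambda+CT$.

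The genuine gap is in the step that closes the fully parabolic estimate. You leave it as ``a random quantity whose moments are controlled'' and attribute the long-time bound to ``the polynomial spatial estimate valid before $\zeta_R$''. Suppose the long-time piece is controlled through the stopping level, $\int_0^{T\wedge\zeta_R}\|u\|_\infty^p\dd t\le TR^p$. Then $\E\int_0^{T\wedge\zeta_R}a^p\dd t\lesssim1+(1+\log R)^pR^p$ and $\Prob(\zeta_R\le T)\lesssim(\log R)^pR^{p-\lambda}$. This is useless because $p>1>\lambda$. Level-dependent polynomial bounds are affordable only in the short-time piece, where $\delta^{\alpha/2}$ with $\delta=R^{-N}$ cancels them. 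For the long-time piece the paper instead uses $\|u(t)\|_\infty\le U(t)$ to obtain
\[
\E\int_0^{T\wedge\rho_L}a(t)^p\dd t\le C_T+C_T(1+\log L)^p\,\E\int_0^{T\wedge\rho_L}U(t)^p\dd t .
\]
It then feeds this back into the \emph{same} Lyapunov inequality, whose dissipative term is $c\,\E\int_0^{T\wedge\rho_L}U^n\dd t$. Since $n-p=n(1-1/\gamma)>0$, which is a second, essential use of $r>3/2$, Young's inequality gives $C(1+\log L)^pU^p\le\frac c2U^n+C(1+\log L)^{pn/(n-p)}$. This absorbs the right-hand side, and $pn/(n-p)=n/(\gamma-1)=\nu_*$ is exactly where the logarithmic exponent comes from. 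This bootstrap is the missing idea in your proposal.

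It also explains why the paper stops on the scalar process, $\rho_L=\inf\{t:U(t)\ge L\}$, rather than on $\|u\|_\infty$. Subtracting $\frac c2\E\int U^n$ requires that quantity to be finite, which is automatic while $U<L$. With your $\zeta_R$ you would need an additional localization in $U$.
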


\begin{theorem}[Global well-posedness for $r=3/2$]\label{thm:critical}
Under the same framework with $r=3/2$, the following assertions hold.
\begin{enumerate}[label=(\roman*),ref=(\roman*),leftmargin=*]
\item\label{item:critical-pe}
If $\tau=0$ and $\sigma_0^2\ge2\chi$, then for every fixed
$d\ge1$ there is a pathwise unique global nonnegative mild solution.
For every $T<\infty$ and every $R>U_0:=1+\|u_0\|_\infty$,
\begin{equation}\label{eq:critical-pe-tail}
\Prob\left(\sup_{0\le t\le T}\|u(t)\|_\infty\ge R\right)
\le\frac{\log(1+U_0)+\chi T}{\log(1+R)}.
\end{equation}
\item\label{item:critical-2d}
If $d=2$, then for either signal equation and every $\sigma_0>0$
there is a pathwise unique global nonnegative mild solution.
When $\tau>0$, for every $T<\infty$,
\[
v\in C^1\bigl([0,T];C(\overline\D)\bigr)
\qquad\text{almost surely}.
\]
\end{enumerate}
\end{theorem}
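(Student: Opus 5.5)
Both parts reduce global existence to a single claim: for every $T<\infty$, the blow-up time $\zeta$ of the local solution cannot occur on $[0,T]$, i.e. $\Prob(\zeta\le T)=0$. Before the general theory is set up, I would take for granted the local well-posedness and continuation criterion of Section~\ref{sec:local}: a unique maximal local mild solution exists, and $\limsup_{t\uparrow\zeta}\|u(t)\|_\infty=\infty$ on $\{\zeta<\infty\}$. Pathwise uniqueness then also follows from the local theory, since any two solutions coincide up to the first amplitude exit time, and these exit times increase to $\zeta$.

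\emph{Part (i).} I would compare $u$ with the scalar SDE
\[
\dd U=\chi U^2\dd t+\sigma_0U^{3/2}\dd W_t,\qquad U(0)=U_0 .
\]
The comparison is justified by the identity $-\chi\Delta v=\chi u-\chi v\le\chi u$, so the drift $-\chi\nabla u\cdot\nabla v-\chi u\Delta v$ at a spatial maximum is bounded by $\chi u^2$. Concretely, the comparison result of Section~\ref{sec:comparison} gives $\|u(t)\|_\infty\le U(t)$ up to the exit time of $u$ from $\{\|u\|_\infty<R\}$.

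The scalar equation is handled by Itô's formula with the Lyapunov function $f(x)=\log(1+x)$:
\[
\dd f(U)=\Bigl[\frac{\chi U^2}{1+U}-\frac{\sigma_0^2U^3}{2(1+U)^2}\Bigr]\dd t+\text{mart.}
\]
I would check that the bracket is at most $\chi$ whenever $\sigma_0^2\ge2\chi$. In that case,
\[
\frac{\chi U^2}{1+U}-\frac{\chi U^3}{(1+U)^2}=\frac{\chi U^2}{(1+U)^2}\le\chi .
\]
Stopping at $\rho_R=\inf\{t:U\ge R\}$ and taking expectations gives $\log(1+R)\,\Prob(\rho_R\le T)\le\log(1+U_0)+\chi T$. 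This yields \eqref{eq:critical-pe-tail}. Letting $R\to\infty$ shows that $U$ does not explode, so $\Prob(\zeta\le T)=0$.

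\emph{Part (ii), $d=2$.} Here I would use no comparison and instead follow the route described in the abstract.

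First, the mass $M_t=\|u(t)\|_1$ satisfies $\dd M=\sigma_0\int u^{3/2}\dd x\,\dd W$. Itô's formula for $\log(1+M)$, or for a power $M^{-\epsilon}$, then gives an almost-sure bound on the quadratic-variation integral
\[
\int_0^{T\wedge\zeta}\frac{\bigl(\int u^{3/2}\bigr)^2}{(1+M)^2}\dd t<\infty .
\]
Since $M$ is a nonnegative local martingale, it is bounded on $[0,T\wedge\zeta)$. Together with the previous bound, this controls $\int_0^{T\wedge\zeta}\|u\|_{3/2}^3\dd t<\infty$ almost surely.

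Second, in two dimensions $L^{3/2}$ is supercritical relative to the critical space $L^1$. Deterministic maximal regularity for the signal equation, either elliptic or parabolic, converts $\int\|u\|_{3/2}^3$ into time-integrated bounds on $\|\nabla v\|_{L^6}$ and related norms.

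Third, I would run a localized energy estimate for $\int u^p\dd x$ up to stopping times. The chemotactic term $\int u^p\Delta v$ is absorbed using Gagliardo--Nirenberg with the random weight $\|\nabla v\|_{6}$, whose time integral is finite, and then Gronwall's lemma applies. The Itô correction enters as $\frac{p(p-1)}2\sigma_0^2\int u^{p+1}$ in the drift, but this is matched by the dissipation-free structure and handled through the martingale term by localization. Successive $L^p$ bounds followed by a semigroup/Moser step then give $\sup_{t<\zeta\wedge T}\|u\|_\infty<\infty$ almost surely. When $\tau>0$, the $C^1$ regularity of $v$ follows from \eqref{eq:mild-v} once $u\in C([0,T];C(\overline\D))$.

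\emph{Main obstacle.} I expect the hard part to be the $L^p$ energy step in (ii). The Itô term $+\sigma_0^2\int u^{p+1}$ has the wrong sign, and bounding it needs the quadratic-variation information carefully. My first attempt would work with $\log(1+\int u^p)$ instead of $\int u^p$, so that the Itô correction is partially cancelled by the $-\frac12(\dd\langle\cdot\rangle)/(1+\cdot)^2$ term, as in Part (i).
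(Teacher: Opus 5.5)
Part (i) is correct and is the paper's argument: comparison with the scalar SDE with $a=0$, the bound $\mathcal L_0\log(1+x)\le\chi$ when $\sigma_0^2\ge2\chi$, the exit estimate at $T\wedge\rho_R$, and the continuation criterion. Your first two steps in part (ii) are also fine. Deriving $\int_0^{T\wedge\zeta}\|u\|_{3/2}^3\dd t<\infty$ from It\^o's formula for $\log(1+M)$, instead of optional stopping plus It\^o's isometry as in Lemma~\ref{lem:local-mass}, is a legitimate variant.

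The gap is the $L^p$ step, which is the heart of (ii).
\begin{itemize}
\item The It\^o correction $\frac{\sigma_0^2p(p-1)}2\int_\D u^{p+1}\dd x$ is a positive drift that is superlinear in $Y_p=\int_\D u^p\dd x$. It cannot be ``handled through the martingale term by localization''. The dissipation $D_p=\|\nabla u^{p/2}\|_2^2$ is exactly what must absorb it.
\item Your fallback $\log(1+Y_p)$ does not supply this. Its extra term $-\sigma_0^2p^2(\int_\D u^{p+1/2})^2/(2(1+Y_p)^2)$ is only bounded \emph{above} in size, via Cauchy--Schwarz $(\int u^{p+1/2})^2\le Y_p\int u^{p+1}$. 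For a tall thin peak carrying a small fraction of $Y_p$ this term is negligible, so a positive drift of order $\int u^{p+1}/(1+Y_p)$ survives.
\item The missing idea is that the quantity from your step 1 is precisely the weight that absorbs the It\^o term. With $w=u^{p/2}$, H\"older and the two-dimensional Gagliardo--Nirenberg inequality give
\[
\int_\D u^{p+1}\dd x\le\|u\|_{3/2}\|w\|_6^2\le\varepsilon D_p+C_\varepsilon\bigl(\|u\|_{3/2}^3+\|u\|_{3/2}\bigr)Y_p .
\]
The chemotactic term is handled the same way through $\|\nabla v\|_6^2\|w\|_3^2$. This yields $\dd Y_p+c_pD_p\dd t\le C_p\Gamma Y_p\dd t+\dd N_p$ with $\Gamma=1+\|u\|_{3/2}^3+\|\nabla v\|_6^3$.
\item Since $\int_0^{T\wedge\zeta}\Gamma\dd t$ is only almost surely finite, not integrable, ``Gronwall'' must be done stochastically. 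The process $e^{-C_p\int_0^t\Gamma}Y_p(t)$ is a nonnegative supermartingale. The maximal inequality on $\{\int_0^{T\wedge\zeta}\Gamma\le K\}$, followed by a union over $K$, gives $\sup_{t<T\wedge\zeta}\|u(t)\|_p<\infty$ almost surely for every $p$, with no bootstrap in $p$.
\item Proposition~\ref{prop:lp-continuation} with $P=3q/2$ then closes the argument in place of your Moser step.
\end{itemize}

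A smaller point concerns the $C^1$ claim. Knowing $u\in C([0,T];C(\overline\D))$ alone does not give $v\in C^1([0,T];C)$ through \eqref{eq:mild-v}. In $C(\overline\D)$ only the bound $\|BS_\tau(t)\|_{C\to C}\le C_Tt^{-1}$ is available, and it is not integrable. You need the spatial H\"older regularity and \eqref{eq:heat-Balpha}, that is, Proposition~\ref{prop:local} applied on $[0,T\wedge\zeta_n]$; once $\zeta=\infty$, one has $\zeta_n\ge T$ for large $n$.
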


\subsection{Finite-time bounds}
\begin{corollary}[A tail bound for the accumulated signal coefficient]\label{cor:tight}
	Let $r>3/2$, $\tau>0$, and let $(u,v)$ be the solution of Theorem~\ref{thm:global}. Set
	\[
	\tau_R=\inf\{t\ge0:\|u(t)\|_\infty\ge R\},\qquad
	A_R(T)=\int_0^{T\wedge\tau_R}\|(v_t(t))_-\|_\infty\dd t.
	\]
	Then, for each $T<\infty$,
	\begin{equation}\label{eq:tight}
		\lim_{K\to\infty}\sup_{R>\|u_0\|_\infty}\Prob\left(A_R(T)>K\right)=0.
	\end{equation}
	More quantitatively, for the exponents in \eqref{eq:exponents} and all sufficiently large $L,K$,
	\begin{equation}\label{eq:tight-rate}
		\sup_{R>\|u_0\|_\infty}\Prob\left(A_R(T)>K\right)
		\le C_T\frac{1+(1+\log L)^{\nu_*}}{(1+L)^\lambda}
		+\frac{C_T[1+L^p(1+\log L)^p]}{K^p}.
	\end{equation}
\end{corollary}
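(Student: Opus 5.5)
We prove \eqref{eq:tight-rate}; then \eqref{eq:tight} follows by letting $K\to\infty$ first and $L\to\infty$ afterwards. The plan is to split $\Omega$ according to whether the solution stays below an auxiliary level $L$ on $[0,T]$. For $L$ with $\|u_0\|_\infty<L$ we write
\[
\Prob(A_R(T)>K)\le\Prob\Bigl(\sup_{0\le t\le T}\|u(t)\|_\infty\ge L\Bigr)+\Prob\bigl(A_R(T)>K,\ \tau_L>T\bigr).
\]
The first term is controlled directly by \eqref{eq:pp-tail} from Theorem~\ref{thm:global}, applied at level $L$. This produces $C_T(1+(1+\log L)^{\nu_*})(1+L)^{-\lambda}$ uniformly in $R$, since the event does not involve $R$.

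For the second term, on $\{\tau_L>T\}$ we have $T\wedge\tau_R\le T=T\wedge\tau_L$, so $A_R(T)\le A_L(T)$. Chebyshev's inequality then gives
\[
\Prob\bigl(A_R(T)>K,\ \tau_L>T\bigr)\le K^{-p}\,\E\bigl[A_L(T)^p\bigr].
\]
It therefore suffices to prove the stopped-moment bound
\begin{equation*}
\E\Bigl(\int_0^{T\wedge\tau_L}\|(v_t)_-\|_\infty\dd t\Bigr)^p\le C_T\bigl[1+L^p(1+\log L)^p\bigr].
\end{equation*}
This is the logarithmic-loss estimate for the signal time derivative mentioned in the introduction, which the paper develops in Section~\ref{sec:global}. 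I would invoke it, or rederive it, as follows.

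To bound $(v_t)_-$, differentiate \eqref{eq:mild-v} to get $\tau v_t=-Bv\cdot\tau+u$ formally. Since $u\ge0$, the negative part is controlled by $\|(\Delta v-v)_-\|_\infty\le\|\Delta v\|_\infty+\|v\|_\infty$ before $\tau_L$. We write
\[
Bv(t)=BS_\tau(t)v_0+\frac1\tau\int_0^t BS_\tau(t-s)u(s)\dd s.
\]
The singular kernel $\|BS_\tau(t-s)\|\sim(t-s)^{-1}$ is the source of trouble. We split the integral at $t-s=\delta$. On the long-time part the integrand is bounded by $\delta^{-1}\sup_s\|u(s)\|_\infty\lesssim L\log(1/\delta)$. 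On the short-time part we subtract $u(t)$, using $\int_0^\delta BS_\tau(\rho)\dd\rho=I-S_\tau(\delta)$, and bound the remainder by a Hölder-in-time modulus of $u$ on $[t-\delta,t]$. Before $\tau_L$ this modulus has polynomial moments of order $L^{c}\delta^{\theta}$ with $\theta>0$: the stochastic convolution is estimated via Appendix~\ref{app:stoch}, with $u^r\le L^r$, and the drift via the local-theory estimates of Section~\ref{sec:local}. Choosing $\delta=L^{-c/\theta}$ makes the short-time piece $O(1)$ in $L^p(\Omega)$ and leaves a long-time bound of order $L\log L$. Integrating over $[0,T]$ and taking the $p$-th moment gives the claimed bound. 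The term from $v_0\in C^2$ is bounded uniformly.

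The main obstacle is this last step. The $p$-th moment of the Hölder modulus before the stopping time must grow only polynomially in $L$, so that the choice of $\delta$ costs only a logarithm. This needs the factorization method for the stochastic convolution, with stopped integrands. If the modulus of the drift is awkward, I would instead try the maximal-regularity bound $\int_0^{T\wedge\tau_L}\|\Delta v\|_{\infty}$, controlled through the $C^\alpha$ norm of $u$, together with the interpolation $\|\Delta v\|_\infty\lesssim\|u\|_{C^\alpha}$. That also yields polynomial growth in $L$, but with a worse exponent, so the logarithmic form would require the splitting above.
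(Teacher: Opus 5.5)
Your proof is correct and has the same skeleton as the paper's. You split at an auxiliary level $L$, control the probability of reaching $L$ by the tail bound, and apply Markov's inequality to the $p$-th moment of the stopped integral, which grows like $L^p(1+\log L)^p$. The differences are in the details.

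The paper splits on the scalar hitting time $\rho_L$ rather than $\tau_L$. Since $\{\rho_L>T\}\subseteq\{\tau_L>T\}$ and $A_R(T)\le\int_0^{T\wedge\rho_L}\|(v_t)_-\|_\infty\dd t$ on that event, it can use \eqref{eq:rho-tail} and Lemma~\ref{lem:a} exactly as stated. With your choice, you cannot invoke Lemma~\ref{lem:a} literally: $u\le U$ forces $\rho_L\le\tau_L$, so that lemma controls a shorter interval than $A_L(T)$ does. You can either switch to $\rho_L$, or note that the proofs of Lemmas~\ref{lem:holder} and~\ref{lem:a} use only $u<L$ before the stopping time. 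They therefore hold verbatim for $\tau_L$, with $\E\int U^p$ replaced by $TL^p$.

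Your alternative derivation of the moment bound also works. It uses a H\"older-in-time modulus of $u$ after subtracting $u(t)$ on the short-time piece, and it even gives a pointwise-in-time bound with a random constant. However, it costs more than the paper's Lemma~\ref{lem:log}, which handles the short-time piece with the spatial estimate \eqref{eq:heat-Balpha} and the $L^p(0,T;C^\alpha)$ bound of Lemma~\ref{lem:holder}. Two extra ingredients are needed.
\begin{itemize}
\item A H\"older-in-time version of the factorization estimate. Appendix~\ref{app:stoch} proves only continuity, although its gap $\theta-\sigma/2-1/q>0$ does supply a positive temporal exponent.
\item A separate treatment of $S(\cdot)u_0$, which is not H\"older in time at $t=0$ when $u_0$ is merely continuous. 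Its contribution to $\int_0^tBS_\tau(t-s)u(s)\dd s$ is still bounded by $C_T\|u_0\|_\infty$, because $S_\tau(t-s)S(s)$ is a damped heat operator at a time comparable to $t$.
\end{itemize}
The paper's spatial route avoids both issues, since the singularity $t^{-\alpha/2}$ of $\|S(t)u_0\|_{C^\alpha}$ is $p$-integrable in time.
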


\begin{corollary}[Fractional supremum moments]\label{cor:sup-moments}
	Let $r>3/2$ and let $(u,v)$ be the solution of Theorem~\ref{thm:global}.
	For every $T<\infty$ and $s\in(0,1)$,
	\begin{equation}\label{eq:sup-moments}
		\E\left[\sup_{0\le t\le T}\|u(t)\|_\infty^s\right]<\infty.
	\end{equation}
\end{corollary}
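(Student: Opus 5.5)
The plan is to integrate the tail bounds of Theorem~\ref{thm:global} through the layer-cake formula. Set $M_T:=\sup_{0\le t\le T}\|u(t)\|_\infty$. This is an almost surely finite random variable, since the solution is global and $u\in C([0,T];C(\overline\D))$ almost surely. For $s\in(0,1)$ we have
\[
\E[M_T^s]=\int_0^\infty \Prob(M_T^s>y)\dd y
= s\int_0^\infty R^{s-1}\Prob(M_T>R)\dd R .
\]
Fix a threshold $R_0$, larger than $\|u_0\|_\infty$ and large enough that \eqref{eq:pe-tail} or \eqref{eq:pp-tail} holds for all $R\ge R_0$ with the chosen $\lambda$. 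We split the integral at $R_0$. On $[0,R_0]$ we bound the probability by $1$, which gives a contribution of at most $R_0^s$.

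Next we choose $\lambda$. Theorem~\ref{thm:global} allows any fixed $\lambda\in(0,1)$, and the constant $C_{T,\lambda}$ may depend on it. Given $s\in(0,1)$, we pick $\lambda\in(s,1)$, for instance $\lambda=(1+s)/2$.

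The tail contribution is then handled separately for the two signal equations.
\begin{itemize}
\item In the elliptic case $\tau=0$, \eqref{eq:pe-tail} gives
\[
s\int_{R_0}^\infty R^{s-1}C_{T,\lambda}(1+R)^{-\lambda}\dd R<\infty ,
\]
because the integrand is $O(R^{s-1-\lambda})$ and $s-\lambda<0$.
\item In the fully parabolic case $\tau>0$, \eqref{eq:pp-tail} contributes an extra factor $1+(1+\log R)^{\nu_*}$. Since $\nu_*$ is a fixed finite exponent, this factor is $O(R^{\epsilon})$ for every $\epsilon>0$. Taking $\epsilon=(\lambda-s)/2$, the integrand is $O(R^{s-1-\lambda+\epsilon})=O(R^{-1-(\lambda-s)/2})$, which is integrable at infinity.
\end{itemize}
Adding the two pieces gives \eqref{eq:sup-moments}.

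The argument is essentially routine, so no step is a real obstacle. The only points needing care are these:
\begin{itemize}
\item choosing $\lambda>s$ strictly, which Theorem~\ref{thm:global} permits since its constants depend on $\lambda$;
\item checking that "sufficiently large $R$" in the theorem is a fixed threshold depending only on $T$ and $\lambda$, so it can be absorbed into $R_0$;
\item using the fact that the logarithmic loss in \eqref{eq:pp-tail} is dominated by the polynomial gap $\lambda-s$.
\end{itemize}
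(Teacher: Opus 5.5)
Your proposal is correct and matches the paper's proof: the paper also picks $\lambda\in(s,1)$, splits the layer-cake integral $s\int_0^\infty R^{s-1}\Prob(X_T>R)\dd R$ at a large deterministic $R_0$, and bounds the tail part by $C_T\int_{R_0}^\infty R^{s-1-\lambda}[1+(1+\log R)^{\nu_*}]\dd R<\infty$. The only cosmetic difference is that the paper handles both signal equations at once with the logarithmically weighted bound, which dominates the elliptic one, whereas you treat $\tau=0$ and $\tau>0$ separately.
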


\subsection{Extinction and decay}
\begin{theorem}[Asymptotic mass extinction]\label{thm:mass}
Let $(u,v)$ be any global nonnegative mild solution of
\eqref{eq:model} under the framework of Section~\ref{sec:setting},
with $r>1$ and $\sigma_0>0$. In particular, the conclusions apply
to the solutions constructed in Theorems~\ref{thm:global}
and~\ref{thm:critical}. Set
	\[
	M(t)=\|u(t)\|_{L^1(\D)},\qquad M_0=\|u_0\|_{L^1(\D)}.
	\]
	Then
	\begin{equation}\label{eq:mass-extinction}
		\lim_{t\to\infty}\|u(t)\|_{L^1(\D)}
		=\lim_{t\to\infty}\|v(t)\|_{L^1(\D)}=0
		\qquad\text{almost surely},
	\end{equation}
	and
	\begin{equation}\label{eq:mass-integral}
		\int_0^\infty M(t)^{2r}\dd t<\infty
		\qquad\text{almost surely}.
	\end{equation}
	For every $s\in(0,1)$ one also has
	\begin{equation}\label{eq:mass-max-moment}
		\E\left[\sup_{t\ge0}M(t)^s\right]
		\le \frac{M_0^s}{1-s},
	\end{equation}
	and
	\begin{equation}\label{eq:mass-moment-extinction}
		\lim_{t\to\infty}\E\|u(t)\|_{L^1(\D)}^s
		=\lim_{t\to\infty}\E\|v(t)\|_{L^1(\D)}^s=0.
	\end{equation}
\end{theorem}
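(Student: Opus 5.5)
Integrating the mild identity \eqref{eq:mild-u} over $\D$ is the starting point. The Neumann heat semigroup preserves integrals, and the divergence term integrates to zero in the weak Neumann sense. Hence $M(t)=\int_\D u(t)\,dx$ satisfies, for $t\le\zeta_n$,
\[
M(t)=M_0+\sigma_0\int_0^t\int_\D u(s)^r\,dx\,\dd W_s .
\]
This uses a stochastic Fubini argument for the $L^q$-valued integral and nonnegativity, so that $\|u\|_1=\int u$. So $M$ is a nonnegative continuous local martingale with quadratic variation
\[
\langle M\rangle_t=\sigma_0^2\int_0^t\Bigl(\int_\D u^r\Bigr)^2\dd s .
\]
Jensen on the bounded domain gives $\int_\D u^r\ge|\D|^{1-r}M^r$, hence
\[
\langle M\rangle_t\ge\sigma_0^2|\D|^{2-2r}\int_0^tM^{2r}\dd s .
\]

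Since $M$ is a nonnegative local martingale, it is a supermartingale and converges a.s. to some finite $M_\infty$. A continuous local martingale converges a.s. exactly on $\{\langle M\rangle_\infty<\infty\}$ (up to null sets). Convergence of $M$ therefore forces $\langle M\rangle_\infty<\infty$ a.s., which gives \eqref{eq:mass-integral}.

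To identify the limit I would use the following argument. On $\{M_\infty>0\}$ we have $M(t)\ge M_\infty/2$ for all large $t$. Then $\int_0^\infty M^{2r}=\infty$, contradicting \eqref{eq:mass-integral}. So $M_\infty=0$ a.s.

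For the signal with $\tau>0$, integrate \eqref{eq:mild-v}: with $m(t)=\int v$ one gets $\tau m'=-m+M$. Hence
\[
m(t)=e^{-t/\tau}m(0)+\tau^{-1}\int_0^te^{-(t-s)/\tau}M(s)\,\dd s,
\]
which tends to $0$ because $M\to0$ and $M$ is bounded along each path. For $\tau=0$, $\int_\D Ku=\int_\D u$, so $\|v\|_1=M$. This completes \eqref{eq:mass-extinction}.

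For the maximal moment \eqref{eq:mass-max-moment} I would use Doob's maximal inequality for nonnegative supermartingales, $\Prob(\sup_tM(t)\ge\lambda)\le M_0/\lambda$. Then
\[
\E\Bigl[\sup_t M^s\Bigr]=\int_0^\infty\Prob\bigl(\sup_t M>\mu^{1/s}\bigr)\dd\mu\le\int_0^\infty\min\{1,M_0\mu^{-1/s}\}\dd\mu .
\]
Splitting at $\mu=M_0^s$ gives
\[
M_0^s+M_0\cdot\frac{s}{1-s}M_0^{s-1}=\frac{M_0^s}{1-s}.
\]
Finally, \eqref{eq:mass-moment-extinction} follows by dominated convergence. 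The dominating function is $\sup_tM^s$, which is integrable by the previous step. For the signal, $\|v(t)\|_1\le\max\{m(0),\sup_sM(s)\}$, which again yields an integrable dominating function.

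The main obstacle is the rigorous first step. It requires justifying that spatial integration of the $C(\overline\D)$-valued mild identity, in particular the stochastic convolution, produces the scalar Itô integral above. It also requires care that the local martingale property holds on $[0,\infty)$ via the localizing times $\zeta_n\uparrow\infty$. I would handle this by pairing \eqref{eq:mild-u} with the test function $1$, which is an eigenfunction of $\Delta_N$ with $S(t)^*1=1$, and then applying the stochastic Fubini theorem on each $[0,\zeta_n]$.
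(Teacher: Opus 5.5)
Your proposal is correct and follows the paper's proof essentially step by step: the mass local martingale, finite quadratic variation, Jensen's lower bound, $M_\infty=0$, the maximal inequality with tail integration, dominated convergence, and the signal mass equation $\tau N'+N=M$. The one small difference is how you get $\langle M\rangle_\infty<\infty$: you use the standard characterization of the convergence set of a continuous local martingale (via Dambis--Dubins--Schwarz), whereas the paper derives it in Lemma~\ref{lem:local-mass} by optional stopping at mass levels $K$ combined with It\^o's isometry.
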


\begin{theorem}[Fractional moment decay]
\label{thm:mass-decay-rate}
Let $(u,v)$ be a global nonnegative solution under the framework
of Section~\ref{sec:setting}, with $r>1$ and $\sigma_0>0$.
Write
\[
M(t)=\|u(t)\|_1,\qquad N(t)=\|v(t)\|_1,
\qquad M_0=M(0),\quad N_0=N(0).
\]
For $0<s<1$, set
\[
\gamma=2r-2>0,\qquad
k_s=\frac{\gamma(1-s)}2\sigma_0^2|\D|^{-\gamma}.
\]
If $M_0>0$, then, for every $t\ge0$,
\begin{equation}\label{eq:mass-decay-rate}
\E M(t)^s
\le\left(M_0^{-\gamma}+k_st\right)^{-s/\gamma}
\end{equation}
and
\begin{equation}\label{eq:future-mass-supremum}
\E\sup_{\ell\ge t}M(\ell)^s
\le\frac1{1-s}
\left(M_0^{-\gamma}+k_st\right)^{-s/\gamma}.
\end{equation}
For $\tau=0$, the bound \eqref{eq:mass-decay-rate} also holds
with $M(t)$ replaced by $N(t)$. For $\tau>0$,
\begin{equation}\label{eq:signal-decay-rate}
\E N(t)^s
\le e^{-st/(2\tau)}
\left(N_0^s+\frac{M_0^s}{1-s}\right)
+\frac1{1-s}
\left(M_0^{-\gamma}+\frac{k_st}{2}\right)^{-s/\gamma}.
\end{equation}
Consequently,
\[
\E\|u(t)\|_1^s+\E\|v(t)\|_1^s
\le C_s(1+t)^{-s/(2r-2)},\qquad t\ge0,
\]
where $C_s$ may depend on the fixed parameters and initial data.
If $M_0=0$, then $M\equiv0$; moreover, $N\equiv0$ for $\tau=0$
and $N(t)=e^{-t/\tau}N_0$ for $\tau>0$.
\end{theorem}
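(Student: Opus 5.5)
Integrating the mild identity \eqref{eq:mild-u} over $\D$ and using the Neumann conditions, the heat and divergence terms integrate to zero. Hence
\[
M(t)=M_0+\sigma_0\int_0^t\int_\D u(s)^r\dd x\dd W_s
\]
is a nonnegative continuous local martingale, with quadratic variation $\dd\langle M\rangle_t=\sigma_0^2\bigl(\int_\D u^r\bigr)^2\dd t$. The first step is to justify this, either with the localizing times $\zeta_n$ or directly on the global solution, by testing against the constant function $1$, which is invariant under $S(t)$. Next, Jensen's (or Hölder's) inequality gives
\[
\int_\D u^r\ge|\D|^{1-r}M^r,
\qquad\text{hence}\qquad
\dd\langle M\rangle_t\ge\sigma_0^2|\D|^{-\gamma}M^{2r}\dd t .
\]
If $M_0=0$, the martingale is nonnegative and starts at $0$, so $M\equiv0$. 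Then $u\equiv0$, so $v=Ku=0$ when $\tau=0$, and $v(t)=S_\tau(t)v_0$ when $\tau>0$. In the latter case $N(t)=e^{-t/\tau}N_0$, because $\int_\D S_\tau(t)v_0=e^{-t/\tau}\int_\D v_0$.

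For \eqref{eq:mass-decay-rate}, take $M_0>0$ and apply Itô's formula to $M^s$, localized at $\inf\{t:M\le\epsilon\text{ or }M\ge k\}$:
\[
\dd M^s=sM^{s-1}\dd M-\tfrac{s(1-s)}2M^{s-2}\dd\langle M\rangle
\le \dd(\text{loc.\ mart.})-\tfrac{s(1-s)}2\sigma_0^2|\D|^{-\gamma}M^{s+\gamma}\dd t .
\]
The process $M^s$ is a nonnegative supermartingale, so taking expectations along the localization is legitimate via Fatou's lemma, and the stopped martingale term is bounded. This yields $\phi(t)=\E M(t)^s$ with
\[
\phi(t)\le\phi(\ell)-c\int_\ell^t\E M^{s+\gamma}
\quad\text{for }\ell\le t,
\qquad
\E M^{s+\gamma}\ge\phi^{(s+\gamma)/s},
\]
the last step by Jensen with the convex map $x\mapsto x^{(s+\gamma)/s}$. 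The resulting differential inequality $\phi'\le-c\,\phi^{1+\gamma/s}$, with $c=\frac{s(1-s)}2\sigma_0^2|\D|^{-\gamma}$, has to be handled in integral form: $\phi$ is nonincreasing, and comparison gives $\phi(t)^{-\gamma/s}\ge M_0^{-\gamma}+\frac{\gamma}{s}ct=M_0^{-\gamma}+k_st$. This is exactly \eqref{eq:mass-decay-rate}. For $\tau=0$, integrating $v-\Delta v=u$ gives $N=M$, so the same bound holds for $N$.

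For \eqref{eq:future-mass-supremum}, I would use the standard maximal inequality for a nonnegative continuous local martingale, applied conditionally on $\mathcal F_t$:
\[
\Prob\Bigl(\sup_{\ell\ge t}M(\ell)>x\,\Big|\,\mathcal F_t\Bigr)\le\min\{1,M(t)/x\}.
\]
Integrating $s\,x^{s-1}$ against this gives $\E[\sup_{\ell\ge t}M(\ell)^s\mid\mathcal F_t]\le M(t)^s/(1-s)$, and \eqref{eq:future-mass-supremum} follows by combining with \eqref{eq:mass-decay-rate}.

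For $\tau>0$, integrating the signal equation gives $\tau N'=-N+M$, so
\[
N(t)=e^{-t/\tau}N_0+\frac1\tau\int_0^te^{-(t-s')/\tau}M(s')\dd s' .
\]
I would split the integral at $t/2$. The part over $[0,t/2]$ is at most $e^{-t/(2\tau)}\sup_{\ell\ge0}M(\ell)$, and the part over $[t/2,t]$ is at most $\sup_{\ell\ge t/2}M(\ell)$. Since $(a+b+c)^s\le a^s+b^s+c^s$ for $s\in(0,1)$, the bound \eqref{eq:signal-decay-rate} follows from \eqref{eq:future-mass-supremum}, applied at time $0$ and at time $t/2$. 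The combined algebraic rate $C_s(1+t)^{-s/(2r-2)}$ is then immediate, because the exponential term is dominated by the algebraic one.

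The main obstacle is rigor in the Itô/Jensen step: $M^s$ is not $C^2$ at $0$, and $\E M^{s+\gamma}$ may a priori be infinite. I would localize away from $0$ and $\infty$, use monotone convergence, and derive the comparison from the integral inequality rather than from a pointwise derivative.
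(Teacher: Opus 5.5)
Your argument is correct. Apart from the central decay estimate it matches the paper: the same $M_0=0$ reasoning, the same conditional maximal inequality for \eqref{eq:future-mass-supremum}, and the same split at $t/2$ with subadditivity for the signal.

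For \eqref{eq:mass-decay-rate}, however, you take a different route. The paper never works with $\phi(t)=\E M(t)^s$. Instead it builds the solution of the comparison ODE into a space--time test function $F(h,x)=x^s(1+k_shx^\gamma)^{-s/\gamma}=(x^{-\gamma}+k_sh)^{-s/\gamma}$. It verifies $F_{xx}<0$ and $\frac{a^2}{2}x^{\gamma+2}F_{xx}-F_h\le0$ with $a^2=\sigma_0^2|\D|^{-\gamma}$, so $F(T-t,M(t))$ has nonpositive drift along every path. It then concludes $\E M(T)^s\le F(T,M_0)$ using a single localization and Fatou's lemma at the terminal time. That route needs no Jensen step, no regularity or monotonicity of $\phi$, and no comparison lemma. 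It even yields the conditional bound $\E[M(T)^s\mid\mathcal F_t]\le F(T-t,M(t))$.

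Your route uses only $x^s$ and closes the moment hierarchy by Jensen, $\E M^{s+\gamma}\ge\phi^{1+\gamma/s}$. It recovers exactly the same constant, since $\frac{\gamma}{s}\cdot\frac{s(1-s)}{2}\sigma_0^2|\D|^{-\gamma}=k_s$. The cost lies in the technical steps you flag:
\begin{enumerate}
\item You genuinely need the integral inequality on every subinterval $[\ell,t]$, as you state. The inequality started at $0$, even combined with continuity and monotonicity of $\phi$, does not yield the sharp constant.
\item Removing the localization in $\E M(\ell\wedge\theta)^s$ requires dominated convergence, using $\E\sup_{t\ge0}M(t)^s\le M_0^s/(1-s)$ from the maximal inequality, not Fatou alone.
\end{enumerate}
That domination also makes $\phi$ continuous. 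The comparison then follows from the Lebesgue--Stieltjes chain rule for $\phi^{-\gamma/s}$, using that $\phi+c\int_0^{\cdot}\phi^{1+\gamma/s}$ is nonincreasing.
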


\section{Local well-posedness and preliminary estimates}\label{sec:local}
Throughout this section, set
\[
\alpha=\frac14,\qquad C=C(\overline\D),\qquad
z_+=\max\{z,0\},\qquad z_-=\max\{-z,0\}.
\]
All constants may depend on $\D,d,q,\chi,\tau,\sigma_0,r$ when
the corresponding parameters are present. A subscript $T$ allows
dependence on the time horizon, on $\|u_0\|_\infty$, and, when
$\tau>0$, on $\|v_0\|_{C^2}$. 

\subsection{Neumann semigroup estimates}
For $0<t\le T$, the estimates needed in the argument are
\begin{align}
	\|\nabla S_\tau(t)f\|_\infty
	&\le C_Tt^{-1/2}\|f\|_\infty,\label{eq:heat-grad}\\
	\|S(t)\diver F\|_{C^\alpha}
	&\le C_Tt^{-(1+\alpha)/2}\|F\|_\infty,\label{eq:heat-div}\\
	\|BS_\tau(t)f\|_\infty
	&\le C_Tt^{-1}\|f\|_\infty,\label{eq:heat-B0}\\
	\|BS_\tau(t)f\|_\infty
	&\le C_Tt^{-1+\alpha/2}\|f\|_{C^\alpha}.\label{eq:heat-Balpha}
\end{align}
The $S_\tau$ estimates are used only for $\tau>0$; see \cite{Lunardi,WinklerSmall} for analytic-semigroup and Neumann heat-kernel estimates. The weak Neumann divergence is defined by
\begin{equation}\label{eq:local-weak-divergence}
	\langle\diver F,\psi\rangle=-\int_\D F\cdot\nabla\psi\dd x,\qquad \psi\in H^1(\D).
\end{equation}
Writing $H(t,x,y)$ for the Neumann heat kernel, differentiated Gaussian bounds give
\[
\|S(t)\diver F\|_\infty\le C_Tt^{-1/2}\|F\|_\infty,
\qquad
\|S(t)\diver F\|_{W^{1,\infty}}\le C_Tt^{-1}\|F\|_\infty.
\]
Interpolation proves \eqref{eq:heat-div}. For \eqref{eq:heat-Balpha}, kernel cancellation yields
\[
|\Delta S(t)f(x)|
=\left|\int_\D\Delta_xH(t,x,y)(f(y)-f(x))\dd y\right|
\le C_Tt^{-1+\alpha/2}[f]_{C^\alpha}.
\]
Combining this with $S_\tau(t)=e^{-t/\tau}S(t/\tau)$ proves \eqref{eq:heat-Balpha}; the zeroth-order term is included in $C_T$. The gradient and analytic-semigroup bounds similarly give \eqref{eq:heat-grad} and \eqref{eq:heat-B0}.

\subsection{Stochastic convolution}
\begin{lemma}\label{lem:stoch}
	Let $g$ be predictable with values in $L^q(\D)$ and suppose
	\[
	\E\int_0^T\|g(s)\|_{L^q}^q\dd s<\infty.
	\]
	Then $Z(t)=\int_0^tS(t-s)g(s)\dd W_s$ has a version in $C([0,T];C^\alpha)$ and
	\begin{equation}\label{eq:local-stoch-general}
		\E\sup_{t\le T}\|Z(t)\|_{C^\alpha}^q
		\le C_TT^{q/4-1}\E\int_0^T\|g(s)\|_{L^q}^q\dd s.
	\end{equation}
	In particular, if $\|g(s)\|_\infty\le M$ almost surely on $[0,T]$, then
	\begin{equation}\label{eq:stoch-bound}
		\E\sup_{t\le T}\|Z(t)\|_{C^\alpha}^q\le C_TM^q.
	\end{equation}
	For $0<T\le1$, the right-hand side of \eqref{eq:stoch-bound} can be replaced by $CT^{q/4}M^q$.
\end{lemma}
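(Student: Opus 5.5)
We will prove the lemma with the factorization method of Da Prato--Kwapie\'n--Zabczyk, working in $L^q(\D)$ and using $q>\max\{8,4d\}$. Fix $\beta\in(0,1/2)$. The identity $\int_s^t(t-\rho)^{\beta-1}(\rho-s)^{-\beta}\dd\rho=\pi/\sin(\pi\beta)$ and the semigroup property give
\[
Z(t)=\frac{\sin\pi\beta}{\pi}\int_0^t(t-\rho)^{\beta-1}S(t-\rho)Y(\rho)\dd\rho,
\qquad
Y(\rho)=\int_0^\rho(\rho-s)^{-\beta}S(\rho-s)g(s)\dd W_s .
\]
The first step is a moment bound on $Y$. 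Since $W$ is scalar and $L^q$ is a UMD space of type $2$, the Burkholder inequality in $L^q$ (equivalently, the scalar BDG inequality applied pointwise in $x$, followed by Fubini) gives
\[
\E\|Y(\rho)\|_{L^q}^q\le C\,\E\Bigl\|\Bigl(\int_0^\rho(\rho-s)^{-2\beta}|S(\rho-s)g(s)|^2\dd s\Bigr)^{1/2}\Bigr\|_{L^q}^q .
\]
We then apply Minkowski's inequality in $L^{q/2}$, the $L^q$-contractivity of $S$, and H\"older in time with exponents $q/2$ and $q/(q-2)$:
\[
\E\|Y(\rho)\|_{L^q}^q\le C\Bigl(\int_0^\rho (\rho-s)^{-2\beta q/(q-2)}\dd s\Bigr)^{(q-2)/2}\E\int_0^\rho\|g\|_{L^q}^q\dd s .
\]
The time integral is finite because $2\beta q/(q-2)<1$. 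Integrating in $\rho\in[0,T]$ yields
\[
\E\int_0^T\|Y\|_{L^q}^q\le C\,T^{(q-2)/2-\beta q+1}\,\E\int_0^T\|g\|_{L^q}^q .
\]

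The second step is the deterministic part. The operator
\[
R_\beta f(t)=\int_0^t(t-\rho)^{\beta-1}S(t-\rho)f(\rho)\dd\rho
\]
maps $L^q(0,T;L^q)$ into $C([0,T];C^\alpha)$. By Sobolev embedding, $D((-\Delta_N)^{\theta})$ (fractional powers of the $L^q$ realization) embeds into $C^\alpha$ once $2\theta-d/q>\alpha$. Analyticity of $S$ gives
\[
\|S(t)f\|_{C^\alpha}\le C_T t^{-\theta}\|f\|_{L^q}.
\]
H\"older's inequality then gives
\[
\|R_\beta f(t)\|_{C^\alpha}\le C\Bigl(\int_0^t s^{(\beta-1-\theta)q'}\dd s\Bigr)^{1/q'}\|f\|_{L^q(0,T;L^q)},
\]
which is finite provided $\beta-\theta>1/q$. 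Continuity in $t$ follows by the standard argument, first for smooth $f$ and then by density.

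The parameters must satisfy
\[
\theta>\tfrac12\bigl(\alpha+\tfrac dq\bigr),\qquad \beta>\theta+\tfrac1q,\qquad \beta<\tfrac12-\tfrac1q .
\]
With $\alpha=1/4$ and $d/q<1/4$ we may take $\theta$ slightly above $1/4$. It then suffices that $1/4+2/q<1/2$, i.e. $q>8$, which holds. The surviving power of $T$ is
\[
T^{\,q(\beta-\theta)-1}\cdot T^{(q-2)/2-\beta q+1}=T^{\,q/2-\theta q-1}.
\]
Choosing $\theta$ close enough to $1/4$ that $\theta q\le q/4+$(a quantity absorbed into $C_T$ on $[0,T]$) gives the factor $T^{q/4-1}$ of \eqref{eq:local-stoch-general} up to constants. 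The continuous version of $Z$ is $\frac{\sin\pi\beta}{\pi}R_\beta Y$, and it coincides with the It\^o integral almost surely for each $t$ by the stochastic Fubini theorem.

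For the special cases, \eqref{eq:stoch-bound} follows from \eqref{eq:local-stoch-general} since
\[
\E\int_0^T\|g\|_{L^q}^q\le T|\D|M^q .
\]
For $T\le1$ the constants are uniform in $T$, so the same bound gives $CT^{q/4}M^q$. I expect the main obstacle to be the bookkeeping of the exact power $T^{q/4-1}$: the embedding exponent $\theta$ must exceed $\alpha/2+d/(2q)$ while the explicit power is stated as $q/4$. If that exact exponent fails, I would instead track the power as $T^{q(1/2-\theta)-1}$ and absorb the discrepancy into $C_T$ for $T$ bounded. For $T\le1$ I would use the stronger embedding $\theta$ near $1/8+d/(2q)<1/4$, which gives an exponent at least $q/4$.
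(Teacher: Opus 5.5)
Your proposal is correct and follows essentially the paper's proof: factorization with an $L^q$-valued BDG bound for $Y$, followed by smoothing into $H^{1/2,q}\hookrightarrow C^\alpha$. The paper's choice corresponds to your $\beta=3/8$ and $\theta=1/4$; you use H\"older in time where the paper uses Young's convolution inequality, which is why you need the slightly stronger condition $\beta<\frac12-\frac1q$. Your end-of-proof worry about the exact power disappears once you note that $\theta=\frac14$ is itself admissible, since $\frac12(\alpha+\frac dq)<\frac14$ precisely because $q>4d$; this gives exactly $T^{q/2-\theta q-1}=T^{q/4-1}$ and, with constants uniform for $T\le1$, the bound $CT^{q/4}M^q$.
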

The proof is given in Appendix~\ref{app:stoch}.

\begin{remark}[Stopped integrands]\label{rem:dimension}
	For a stopping time $\vartheta$, define the extended convolution
	\begin{equation}\label{eq:local-stopped-convolution}
		g^\vartheta(s)=\ind_{\{s\le\vartheta\}}g(s),\qquad
		Z^\vartheta(t)=\int_0^tS(t-s)g^\vartheta(s)\dd W_s.
	\end{equation}
	It agrees with $Z(t)$ for $t\le\vartheta$. Applying the preceding lemma to $g^\vartheta$ gives
	\begin{equation}\label{eq:local-stopped-bound}
		\E\sup_{t\le T\wedge\vartheta}\|Z(t)\|_{C^\alpha}^q
		\le C_TT^{q/4-1}\E\int_0^{T\wedge\vartheta}\|g(s)\|_{L^q}^q\dd s.
	\end{equation}
%
	
\end{remark}

\subsection{Logarithmic signal estimate}
\begin{lemma}\label{lem:log}
	Let $\tau>0$, $1<p<\infty$, $v_0\in D(B)$ and $f\in L^p(0,T;C^\alpha)$. Define
	\[
	w(t)=S_\tau(t)v_0+\frac1\tau\int_0^tS_\tau(t-s)f(s)\dd s.
	\]
	For $0<\varepsilon<\min\{1,T\}$, one has
	\begin{align}\label{eq:log-signal}
		\|w_t\|_{L^p(0,T;C)}
		&\le C_T\|Bv_0\|_\infty
		+C_T(1+|\log\varepsilon|)\|f\|_{L^p(0,T;C)}\notag\\
		&\quad+C_T\varepsilon^{\alpha/2}\|f\|_{L^p(0,T;C^\alpha)}.
	\end{align}
\end{lemma}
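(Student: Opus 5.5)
Differentiate the mild formula. Since $v_0\in D(B)$, we have $\frac{d}{dt}S_\tau(t)v_0=-S_\tau(t)Bv_0$, so by contractivity of $S_\tau$ on $C$ this part contributes $\|S_\tau(t)Bv_0\|_\infty\le\|Bv_0\|_\infty$; its $L^p(0,T)$ norm is then $\le T^{1/p}\|Bv_0\|_\infty$. For the Duhamel part $Y(t)=\frac1\tau\int_0^tS_\tau(t-s)f(s)\dd s$, I will use the identity $Y_t=\frac1\tau f(t)-BY(t)$. This holds for $f$ with values in $C^\alpha$, since $(0,T]\ni s\mapsto BS_\tau(s)$ is integrable against $C^\alpha$ data by \eqref{eq:heat-Balpha}. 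The term $\frac1\tau f$ is bounded in $L^p(0,T;C)$ by $\frac1\tau\|f\|_{L^p(0,T;C)}$, so it remains to estimate $BY(t)=\frac1\tau\int_0^t BS_\tau(t-s)f(s)\dd s$.

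Split the integral at distance $\varepsilon$ from the diagonal:
\[
BY(t)=\frac1\tau\int_0^{(t-\varepsilon)_+}BS_\tau(t-s)f(s)\dd s+\frac1\tau\int_{(t-\varepsilon)_+}^{t}BS_\tau(t-s)f(s)\dd s=:I_1(t)+I_2(t).
\]

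For the long-time part, \eqref{eq:heat-B0} gives
\[
\|I_1(t)\|_\infty\le C_T\int_0^t\ind_{\{t-s\ge\varepsilon\}}(t-s)^{-1}\|f(s)\|_\infty\dd s.
\]
This is a convolution of $\|f(\cdot)\|_\infty$ with the kernel $k_\varepsilon(\sigma)=\ind_{[\varepsilon,T]}(\sigma)\sigma^{-1}$. Young's inequality $L^1*L^p\to L^p$ on $(0,T)$ then yields
\[
\|I_1\|_{L^p(0,T)}\le C_T\bigl(\log T+|\log\varepsilon|\bigr)\|f\|_{L^p(0,T;C)}\le C_T(1+|\log\varepsilon|)\|f\|_{L^p(0,T;C)}.
\]
For the short-time part, \eqref{eq:heat-Balpha} gives
\[
\|I_2(t)\|_\infty\le C_T\int_0^t\ind_{\{t-s<\varepsilon\}}(t-s)^{-1+\alpha/2}\|f(s)\|_{C^\alpha}\dd s.
\]
Here the kernel has $L^1$ norm $\frac2\alpha\varepsilon^{\alpha/2}$, so Young again gives $\|I_2\|_{L^p}\le C_T\varepsilon^{\alpha/2}\|f\|_{L^p(0,T;C^\alpha)}$. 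Summing the four contributions gives \eqref{eq:log-signal}.

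The main obstacle is justifying the differentiation identity $Y_t=\frac1\tau f-BY$ and the measurability of $w_t$ under the weak hypothesis $f\in L^p(0,T;C^\alpha)$, which is only integrable in time and not continuous. I would first prove the estimate for $f\in C^1([0,T];C^\alpha)$, where the classical analytic-semigroup theory (cf.\ \cite{Lunardi}) yields a strict solution with $Y_t=\frac1\tau f-BY$. Because the right-hand side of \eqref{eq:log-signal} is continuous in $f$ with respect to the $L^p(0,T;C^\alpha)$ norm (take $\varepsilon$ fixed), the extension to general $f$ follows by density. In that case $w_t$ is understood as the distributional time derivative, identified with the $L^p(0,T;C)$ limit. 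A minor point is that the $S_\tau$ bounds are stated on $C(\overline\D)$, so $B$ acting on $C^\alpha$ data must be read in that realization; this is compatible with the kernel-cancellation proof given for \eqref{eq:heat-Balpha}.
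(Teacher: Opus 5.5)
Your proposal is correct and follows the paper's proof essentially step by step. It differentiates the variation-of-constants formula for time-smooth $C^\alpha$-valued $f$, splits $\int_0^t BS_\tau(t-s)f(s)\dd s$ at distance $\varepsilon$ from the diagonal, and bounds the long- and short-time pieces with \eqref{eq:heat-B0} and \eqref{eq:heat-Balpha} together with Young's inequality; it then extends to general $f$ by density. One point should be stated explicitly, as the paper does through the boundedness of $f\mapsto\int_0^t BS_\tau(t-s)f(s)\dd s$ from $L^p(0,T;C^\alpha)$ to $L^p(0,T;C)$: your estimate, applied to $f_n-f_m$ with $v_0=0$, makes $(w_n)_t$ Cauchy in $L^p(0,T;C)$, and this convergence, rather than mere continuity of the right-hand side, is what identifies $w_t$ with the limit.
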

\begin{proof}
	We first take $f$ smooth in time with values in $C^\alpha$.
	Using $\partial_tS_\tau(t)=-BS_\tau(t)$, differentiation of
	the variation-of-constants formula gives
	\begin{equation}\label{eq:signal-derivative}
		w_t(t)
		=-S_\tau(t)Bv_0+\frac{f(t)}{\tau}
		-\frac1\tau\int_0^tBS_\tau(t-s)f(s)\dd s.
	\end{equation}
	The differentiation is justified by \eqref{eq:heat-Balpha},
	whose time singularity is integrable since $\alpha>0$.
	We obtain estimates independent of the time derivatives of $f$
	and then pass to general $f$ by approximation.
	
	Fix $0<\varepsilon<\min\{1,T\}$ and split the last integral as
	\[
	\int_0^tBS_\tau(t-s)f(s)\dd s
	=J_{<\varepsilon}(t)+J_{\ge\varepsilon}(t),
	\]
	where
	\begin{align*}
		J_{<\varepsilon}(t)
		&=\int_{(t-\varepsilon)_+}^t
		BS_\tau(t-s)f(s)\dd s,\\
		J_{\ge\varepsilon}(t)
		&=\ind_{\{t>\varepsilon\}}
		\int_0^{t-\varepsilon}BS_\tau(t-s)f(s)\dd s.
	\end{align*}
	For the long-time part, \eqref{eq:heat-B0} gives
	\[
	\|J_{\ge\varepsilon}(t)\|_\infty
	\le C_T\int_0^t
	\ind_{\{t-s\ge\varepsilon\}}(t-s)^{-1}
	\|f(s)\|_\infty\dd s.
	\]
	For the short-time part, we instead use
	\eqref{eq:heat-Balpha} to obtain
	\[
	\|J_{<\varepsilon}(t)\|_\infty
	\le C_T\int_0^t
	\ind_{\{t-s<\varepsilon\}}(t-s)^{-1+\alpha/2}
	\|f(s)\|_{C^\alpha}\dd s.
	\]
	The corresponding time kernels satisfy
	\[
	\int_\varepsilon^T h^{-1}\dd h
	=\log(T/\varepsilon)
	\le C_T(1+|\log\varepsilon|),
	\qquad
	\int_0^\varepsilon h^{-1+\alpha/2}\dd h
	=\frac2\alpha\varepsilon^{\alpha/2}.
	\]
	Young's convolution inequality therefore yields
	\begin{align}
		\|J_{\ge\varepsilon}\|_{L^p(0,T;C)}
		&\le C_T(1+|\log\varepsilon|)
		\|f\|_{L^p(0,T;C)},\notag\\
		\|J_{<\varepsilon}\|_{L^p(0,T;C)}
		&\le C_T\varepsilon^{\alpha/2}
		\|f\|_{L^p(0,T;C^\alpha)}.
		\label{eq:local-signal-split}
	\end{align}
	Combining these estimates with
	\[
	\|S_\tau(\cdot)Bv_0\|_{L^p(0,T;C)}
	\le C_T\|Bv_0\|_\infty,
	\qquad
	\|f/\tau\|_{L^p(0,T;C)}
	=\tau^{-1}\|f\|_{L^p(0,T;C)}
	\]
	proves \eqref{eq:log-signal} for smooth $f$.
	Here and below, the constants are independent of $\varepsilon$.
	
	To remove the time-smoothness assumption, choose
	$f_n\in C^\infty([0,T];C^\alpha)$ such that
	$f_n\to f$ in $L^p(0,T;C^\alpha)$, and let $w_n$ be the
	corresponding variation-of-constants solutions.
	The operator
	\[
	(\mathcal Kf)(t)
	:=\int_0^tBS_\tau(t-s)f(s)\dd s
	\]
	is bounded from $L^p(0,T;C^\alpha)$ to $L^p(0,T;C)$:
	indeed, \eqref{eq:heat-Balpha} and Young's inequality give
	\[
	\|\mathcal Kf\|_{L^p(0,T;C)}
	\le C_T\|f\|_{L^p(0,T;C^\alpha)}.
	\]
	Consequently, $w_n\to w$ in $L^p(0,T;C)$ and
	\[
	(w_n)_t\longrightarrow
	-S_\tau(\cdot)Bv_0+\frac f\tau-\frac1\tau\mathcal Kf
	\quad\text{in }L^p(0,T;C).
	\]
	Passing to the limit in the weak derivative identity shows that
	$w\in W^{1,p}(0,T;C)$ and that
	\eqref{eq:signal-derivative} holds almost everywhere in time.
	Finally, passing to the limit in the estimate for $w_n$
	proves \eqref{eq:log-signal} for the stated $f$.
\end{proof}

\subsection{Local well-posedness}
\begin{proposition}\label{prop:local}
	There is a pathwise unique maximal nonnegative local mild solution  of system~\eqref{eq:model} with lifetime $\zeta>0$. Moreover,
	\begin{equation}\label{eq:alternative}
		\{\zeta<\infty\}\subseteq
		\left\{\limsup_{t\uparrow\zeta}\|u(t)\|_\infty=\infty\right\}
		\quad\text{almost surely}.
	\end{equation}
For each bounded localizing time $\vartheta<\zeta$, almost surely,
\[
u\in L^2(0,\vartheta;H^1(\D)),\qquad
u-S(\cdot)u_0\in C([0,\vartheta];C^\alpha),
\]
where the latter process is zero at time $0$.
Moreover, $u$ has continuous $C^\alpha$-valued paths on every
compact subinterval of $(0,\zeta)$, and $\nabla v$ is bounded
before $\vartheta$. When $\tau>0$,
$v\in C^1([0,\vartheta];C)$ almost surely.
\end{proposition}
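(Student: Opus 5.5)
The plan is a truncation-and-patching argument of Da Prato--Zabczyk type, carried out directly in $C(\overline\D)$ with the auxiliary $L^q$ framework supplying the stochastic convolution.

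\emph{Truncated problem.} Fix $N\ge1$ and let $\phi_N$ be a smooth cutoff with $\phi_N(z)=1$ for $z\le N$ and $\phi_N(z)=0$ for $z\ge N+1$. Consider the truncated mild equation in which the drift term $u\nabla v$ is multiplied by $\phi_N(\|u(s)\|_\infty)$, the noise coefficient is replaced by the globally Lipschitz map $u\mapsto \phi_N(\|u\|_\infty)\,(u_+\wedge (N+1))^r$, and $v$ is given by \eqref{eq:elliptic-initial} or \eqref{eq:mild-v}.

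\emph{Fixed point.} On the space of adapted processes in $C([0,T];C)$ with norm $(\E\sup_{t\le T}\|u\|_\infty^q)^{1/q}$ I would run a contraction argument for small $T$ and then iterate. The estimates needed are as follows.
\begin{itemize}
\item The drift is handled by \eqref{eq:heat-div}, combined with $\|\nabla v\|_\infty\le C\|u\|_\infty$. For $\tau=0$ this follows from elliptic regularity for $K$ in $W^{2,q}\subset C^1$ since $q>d$; for $\tau>0$ it follows from \eqref{eq:heat-grad}, giving a $t^{1/2}$ factor.
\item The noise is handled by Lemma~\ref{lem:stoch}, using $T^{q/4}$ for small $T$ together with the Lipschitz property.
\end{itemize}
Since the truncated coefficients are globally Lipschitz in $C$, this gives a unique global solution $u^N$, with $u^N-S(\cdot)u_0\in C([0,T];C^\alpha)$ coming from \eqref{eq:heat-div} and Lemma~\ref{lem:stoch}.

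\emph{Maximal solution.} Set $\zeta_N=\inf\{t:\|u^N(t)\|_\infty\ge N\}$. Uniqueness of the truncated problem, applied on $[0,\zeta_N\wedge\zeta_{N+1}]$ via the stopped convolution of Remark~\ref{rem:dimension}, shows that $u^{N+1}=u^N$ up to $\zeta_N$. Hence $\zeta_N$ is nondecreasing, and I define $\zeta=\lim\zeta_N$ and $u=u^N$ on $[0,\zeta_N]$. The localizing times in Definition~\ref{def:local-solution} are $\zeta_N\wedge N$, and $\zeta>0$ holds because $u$ is continuous and $\|u_0\|_\infty<N$ for large $N$. The blow-up alternative \eqref{eq:alternative} then holds by construction: on $\{\zeta<\infty\}$, $\|u\|_\infty$ reaches every level $N$ before $\zeta$. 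Pathwise uniqueness comes from the same argument, since any local solution agrees with $u^N$ up to its own exit time of level $N$.

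\emph{Nonnegativity.} I expect this to be the main obstacle, because the mild formulation does not immediately give a sign. My approach is to prove $u^N\ge0$ up to $\zeta_N$ by an Itô argument on $\int_\D (u_-)^2$, applied to a regularized version. I would mollify $u_0$ and work with a Galerkin or Yosida approximation $u^{N,\epsilon}$ that is a strong solution. Itô's formula for $\int\Phi(u)$, with $\Phi$ a smooth approximation of $z_-^2$, then gives three contributions:
\begin{itemize}
\item the diffusion term is $\le0$;
\item the chemotaxis term reduces, after integration by parts, to $\chi\int u_-\nabla u_-\cdot\nabla v$, which Young's inequality bounds by $\frac12\|\nabla u_-\|_2^2+C\|\nabla v\|_\infty^2\|u_-\|_2^2$;
\item the Itô correction involves $(u_+\wedge(N+1))^{2r}$, which vanishes where $u<0$.
\end{itemize}
Gronwall's lemma then gives $u_-\equiv0$, and I would pass to the limit $\epsilon\to0$. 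As an alternative, one can use the truncated coefficient $(u_+)^r$, so that the noise vanishes on $\{u\le0\}$, and apply a parabolic comparison principle directly. With nonnegativity established, the truncation $(u_+\wedge (N+1))^r$ agrees with $u^r$ up to $\zeta_N$.

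\emph{Regularity.} For the remaining claims:
\begin{itemize}
\item $L^2(0,\vartheta;H^1)$ follows from the energy estimate for $\int u^2$ on the same approximations, using that $u$ and $\nabla v$ are bounded before $\vartheta$.
\item The $C^\alpha$ continuity on compact subintervals of $(0,\zeta)$ follows from the smoothing of $S(t)u_0$ for $t>0$.
\item For $\tau>0$, the property $v\in C^1([0,\vartheta];C)$ follows from \eqref{eq:signal-derivative}. Here $v_0\in C^2$ with $\partial_\nu v_0=0$ implies $v_0\in D(B)$. I would split $u=S(\cdot)u_0+(u-S(\cdot)u_0)$: the $C^\alpha$ part is handled by \eqref{eq:heat-Balpha}, and the $S(\cdot)u_0$ part is handled through the semigroup identity $\int_0^t BS_\tau(t-s)S(s)u_0\,\dd s$, which is continuous in $C$ because of analyticity. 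Continuity in $t$ then follows by dominated convergence.
\end{itemize}
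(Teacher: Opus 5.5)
\textbf{Gap in the truncation for $\tau>0$.} Your architecture matches the paper's: truncate, contract in the norm $(\E\sup_{t\le h}\|w(t)\|_\infty^q)^{1/q}$, patch along $\zeta_N$, then prove positivity and regularity. For $\tau=0$ your norm cutoff works, because $\|\nabla Ku\|_\infty\le C\|u\|_\infty$ involves only the current value of $u$.

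For $\tau>0$, however, the step ``the truncated coefficients are globally Lipschitz'' fails.
\begin{itemize}
\item By \eqref{eq:mild-v}, $\nabla v(s)$ depends on the whole history of $u$. Estimate \eqref{eq:heat-grad} only gives $\|\nabla v(s)\|_\infty\le C_T(1+\sup_{r\le s}\|u(r)\|_\infty)$.
\item The factor $\phi_N(\|u(s)\|_\infty)$ caps only the current value. Two inputs that share a large past excursion and differ by $\delta$ at time $s$ therefore give flux differences of order $\delta\sup_{r\le s}\|u(r)\|_\infty$.
\item The contraction estimate would need $\E\bigl[\sup_{t\le h}\|\nabla v[w_1](t)\|_\infty^q\,\sup_{t\le h}\|w_1(t)-w_2(t)\|_\infty^q\bigr]$ to be bounded by a constant times $\E\sup_{t\le h}\|w_1(t)-w_2(t)\|_\infty^q$. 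This is impossible on the full space of adapted processes, so the fixed-point argument does not close as stated.
\end{itemize}
The repair is simple. Either cut off with the running supremum $\phi_N(\sup_{r\le s}\|u(r)\|_\infty)$, or truncate the source of the signal equation itself. The paper does the latter, with $v_N[w](t)=S_\tau(t)v_0+\tau^{-1}\int_0^tS_\tau(t-s)\phi_N(w(s))\dd s$ for a pointwise cutoff $\phi_N$. Either choice agrees with the original system before $\zeta_N$, so your consistency, blow-up alternative and uniqueness arguments are unaffected.

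\textbf{Remaining differences.} With that fixed, what is left are differences of route plus one missing step.
\begin{itemize}
\item \emph{Positivity.} You pass through regularized strong approximations, so you must still justify the limit $\epsilon\to0$ in $\|(u^{N,\epsilon})_-\|_2^2$. The paper avoids this by applying the variational It\^o formula directly to $\|(u_N)_-\|_2^2$, using the truncated energy bound $u_N\in L^2(\Omega\times(0,T);H^1)$.
\item \emph{Continuity of $v_t$ at $t=0$.} Your treatment of $\int_0^tBS_\tau(t-s)S(s)u_0\dd s$ works once you note two facts. This operator is bounded on $C$ uniformly in $t\in(0,T]$. For $u_0\in D(B)$ it equals $\int_0^tS_\tau(t-s)S(s)Bu_0\dd s=O(t)$, and $D(B)$ is dense. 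The paper instead shows $\sup_{0<s\le t}s^{\alpha/2}\|u(s)\|_{C^\alpha}\to0$ and uses a beta-function integral, which handles $S(\cdot)u_0$ and $u-S(\cdot)u_0$ at once.
\item \emph{Missing step: time continuity in $C^\alpha$.} Estimate \eqref{eq:heat-div} only bounds the deterministic convolution in $C^\alpha$; it does not give time continuity in $C^\alpha$. You need an extra step here, such as the paper's $C^{\beta_0}$ bound with $\beta_0>\alpha$, interpolated against continuity in $C$.
\end{itemize}
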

The proof is given in Appendix~\ref{app:local}.

\begin{proposition}
\label{prop:lp-continuation}
Assume the framework of Section~\ref{sec:setting}, with $r>1$,
and let $(u,v)$ be the maximal local solution from
Proposition~\ref{prop:local}, with lifetime $\zeta$.
For the fixed exponent $q>\max\{8,4d\}$, set $P=rq$.
We have
\begin{equation}\label{eq:lp-alternative}
    \{\zeta<\infty\}
    \subseteq
    \left\{
        \limsup_{t\uparrow\zeta}\|u(t)\|_{L^P}=\infty
    \right\}
    \qquad\text{almost surely}.
\end{equation}
\end{proposition}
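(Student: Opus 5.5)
We argue by contraposition against \eqref{eq:alternative}: on the event where $\zeta<\infty$ and $\sup_{t<\zeta}\|u(t)\|_{L^P}\le K$ for some finite $K$, we show that $\sup_{t<\zeta}\|u(t)\|_\infty<\infty$ almost surely. This contradicts Proposition~\ref{prop:local} up to a null set, and taking the union over $K\in\mathbb N$ proves \eqref{eq:lp-alternative}. To make this rigorous we localize. For fixed $T,K,N$ set
\[
\vartheta=\inf\{t:\|u(t)\|_{L^P}\ge K\}\wedge\inf\{t:\|u(t)\|_\infty\ge N\}\wedge T\wedge\zeta_N .
\]
The aim is a bound $\E\sup_{t\le\vartheta}\|u(t)\|_\infty^q\le C_{T,K}$ that is \emph{independent of $N$}. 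Once we have it, $\Prob(\sup_{t<\zeta\wedge T}\|u\|_\infty=\infty,\ \sup\|u\|_{L^P}<K)=0$ follows by letting $N\to\infty$.

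The bound comes from estimating each term of \eqref{eq:mild-u} in $C^\alpha\subset C$ before $\vartheta$. The stochastic term uses Remark~\ref{rem:dimension} with $g=\sigma_0u^r$. Since $\|u^r\|_{L^q}^q=\|u\|_{L^P}^P\le K^P$ before $\vartheta$, \eqref{eq:local-stopped-bound} gives $\E\sup_{t\le\vartheta}\|Z\|_{C^\alpha}^q\le C_TK^P$. This step is the reason for the choice $P=rq$. The free term obeys $\|S(t)u_0\|_\infty\le\|u_0\|_\infty$. For the drift we need $\nabla v$ in $L^\infty$. When $\tau=0$, elliptic regularity gives $\|\nabla Ku\|_\infty\le C\|u\|_{L^P}$, because $P\ge q>d$. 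When $\tau>0$, we use $\|\nabla S_\tau(t)f\|_\infty\le C_Tt^{-1/2-d/(2P)}\|f\|_{L^P}$, where the exponent is integrable since $P>d$; together with $v_0\in C^2$ this gives $\sup_{t\le\vartheta}\|\nabla v\|_\infty\le C_T(1+K)$. Hence $\|u\nabla v\|_\infty\le C_{T,K}\|u\|_\infty$.

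Write $Y(t)=\sup_{s\le t\wedge\vartheta}\|u(s)\|_\infty$. By \eqref{eq:heat-div}, the drift term is bounded by $C_{T,K}\int_0^t(t-s)^{-(1+\alpha)/2}Y(s)\dd s$, which gives the Volterra-type inequality
\[
Y(t)\le \|u_0\|_\infty+\sup_{s\le\vartheta}\|Z(s)\|_{C^\alpha}+C_{T,K}\int_0^t(t-s)^{-5/8}Y(s)\dd s .
\]
$Y$ is finite because of the cutoff at $N$, so the singular (Henry) Gronwall lemma applies pathwise and yields $Y(T)\le C_{T,K}(\|u_0\|_\infty+\sup_{s\le\vartheta}\|Z(s)\|_{C^\alpha})$. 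Taking $q$-th moments gives the required bound, uniform in $N$.

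The main obstacle is this linear-in-$\|u\|_\infty$ control of the drift. The bound on $\nabla v$ has to come only from the $L^P$ bound, and the Gronwall argument must be carried out pathwise with a random right-hand side. A secondary subtlety is that $\vartheta$ must be a stopping time. This holds because $t\mapsto\|u(t)\|_{L^P}$ is continuous, since $u\in C([0,\zeta);C)$. We also need the stopped convolution to agree with the true noise term up to $\vartheta$, which Remark~\ref{rem:dimension} provides.
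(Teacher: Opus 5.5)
Your argument is correct, and its skeleton is the paper's. You localize at an $L^P$ level and get $\|\nabla v\|_\infty\le C_T(1+K)$ from the $L^P\to W^{1,\infty}$ smoothing of the signal, which is where $P>d$ enters. You control the stochastic convolution through $\|u^r\|_{L^q}^q=\|u\|_{L^P}^P$, and then contradict \eqref{eq:alternative}.

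The difference is the drift. The paper keeps the flux in $L^P$, namely $\|u\nabla v\|_{L^P}\le C_TR(1+R)$. It then uses the kernel bound $\|S(h)\diver F\|_\infty\le C_Th^{-\mu_P}\|F\|_{L^P}$ with $\mu_P=\frac12+\frac d{2P}<1$, which gives directly
\[
\sup_{t<T\wedge\kappa_R}\|u(t)\|_\infty\le\|u_0\|_\infty+C_TR(1+R)+\sigma_0\sup_{t\le T}\|Z_R(t)\|_\infty .
\]
Here $\|u\|_\infty$ does not feed back into the right-hand side. So there is no Gronwall step and no second localization at level $N$, and the contradiction is drawn pathwise.

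You instead bound the flux in $L^\infty$, linearly in $\|u\|_\infty$, and close with Henry's singular Gronwall lemma. This needs only the $L^\infty$ estimate \eqref{eq:heat-div} from Section~\ref{sec:local}; the plain $t^{-1/2}$ bound for $S(t)\diver$ in $L^\infty$ would also do. The cost is the extra localization and the Volterra argument. In return you get a $q$-th moment bound uniform in $N$, which is more than the statement needs.

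Two small steps should be made explicit:
\begin{enumerate}
\item Passing to the running supremum $Y$ uses that $t\mapsto\int_0^t(t-s)^{-5/8}Y(s)\dd s$ is nondecreasing for nondecreasing $Y\ge0$.
\item Reducing the $\limsup$ in \eqref{eq:lp-alternative} to $\sup_{t<\zeta}\|u(t)\|_{L^P}<\infty$ uses continuity of $t\mapsto\|u(t)\|_{L^P}$ on $[0,\zeta)$.
\end{enumerate}
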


\begin{proof}
Fix $T>0$ and $R>\max\{1,\|u_0\|_{L^P}\}$, and define the
stopping time
\[
\kappa_R=\zeta\wedge
\inf\{t<\zeta:\|u(t)\|_{L^P}\ge R\},
\]
with $\inf\varnothing=\infty$.
Since $P=rq>d$, one has
\[
\mu_P:=\frac12+\frac{d}{2P}<1.
\]
For $\tau>0$, the formula \eqref{eq:mild-v} and the
Neumann heat-kernel gradient estimate give
\[
\|\nabla v(t)\|_\infty
\le C_T+C_T\int_0^t(t-s)^{-\mu_P}\|u(s)\|_{L^P}\dd s
\le C_T(1+R),\qquad t<T\wedge\kappa_R.
\]
Since $v_0\in C^2(\overline\D)$ and $\partial_\nu v_0=0$,
the initial semigroup term satisfies
\[
\sup_{0\le t\le T}\|\nabla S_\tau(t)v_0\|_\infty
\le C_T\|v_0\|_{C^1}.
\]
For $\tau=0$, the same conclusion follows from
$\|Ku\|_{W^{2,P}}\le C\|u\|_{L^P}$ and the embedding
$W^{2,P}\hookrightarrow W^{1,\infty}$.
The constants involved may depend on $T$ and the prescribed
data, but not on $R$.

On the full deterministic interval $[0,T]$, define
\[
F_R(t)=\ind_{\{t<\kappa_R\}}u(t)\nabla v(t),
\qquad
G_R(t)=\ind_{\{t<\kappa_R\}}u(t)^r,
\]
setting them equal to zero at and after $\kappa_R$.
In particular, no value of $u(\zeta)$ is needed.
The gradient bound and $P=rq$ imply, almost everywhere on $[0,T]$,
\[
\|F_R(t)\|_{L^P}\le C_TR(1+R),
\qquad
\|G_R(t)\|_{L^q}
=\|\ind_{\{t<\kappa_R\}}u(t)\|_{L^P}^{r}
\le R^r.
\]
Lemma~\ref{lem:stoch}, applied in $L^q$, therefore gives a version
of
\[
Z_R(t)=\int_0^tS(t-s)G_R(s)\dd W_s
\]
with continuous $C^\alpha$-valued paths and
\[
\E\sup_{t\le T}\|Z_R(t)\|_{C^\alpha}^q
\le C_TR^{rq}<\infty.
\]

For the deterministic convolution, use the weak Neumann divergence
from \eqref{eq:local-weak-divergence}.
If $H$ denotes the Neumann heat kernel, then
\[
S(h)\diver F(x)
=-\int_\D\nabla_yH(h,x,y)\cdot F(y)\dd y,
\]
and the differentiated kernel bounds and H\"older's inequality imply
\[
\|S(h)\diver F\|_\infty
\le C_Th^{-\mu_P}\|F\|_{L^P},\qquad 0<h\le T.
\]
Locality of the stochastic integral identifies $Z_R$ with the
original stochastic convolution before $\kappa_R$.
Using the mild equation and $\mu_P<1$, we obtain
\[
\sup_{t<T\wedge\kappa_R}\|u(t)\|_\infty
\le \|u_0\|_\infty+C_TR(1+R)
+\sigma_0\sup_{t\le T}\|Z_R(t)\|_\infty
<\infty
\qquad\text{almost surely}.
\]
On the event
\[
\left\{\zeta\le T,\quad
\sup_{t<\zeta}\|u(t)\|_{L^P}<R\right\},
\]
one has $\kappa_R=\zeta$, so this bound contradicts
\eqref{eq:alternative}.
Taking a countable union over integer $R$ and $T$ excludes
finite lifetime with bounded $L^P$ norm.
Finally, continuity on compact subintervals of $[0,\zeta)$
shows that a finite $\limsup_{t\uparrow\zeta}\|u(t)\|_{L^P}$
would imply $\sup_{t<\zeta}\|u(t)\|_{L^P}<\infty$.
This proves \eqref{eq:lp-alternative}.
\end{proof}

\subsection{Mass control}
\begin{lemma}\label{lem:local-mass}
	Let $r>1$ and let $(u,v)$ be the maximal local solution of system~\eqref{eq:model}  with lifetime $\zeta$. Set
	\[
	M(t)=\int_\D u(t,x)\dd x,\qquad H_r(t)=\int_\D u(t,x)^r\dd x,\qquad t<\zeta.
	\]
	Then
	\begin{equation}\label{eq:local-mass-sde}
		M(t)=M_0+\sigma_0\int_0^tH_r(s)\dd W_s,
	\end{equation}
	and, almost surely,
	\begin{equation}\label{eq:local-mass-control}
		\sup_{t<\zeta}M(t)<\infty,\qquad
		\int_0^\zeta H_r(t)^2\dd t<\infty.
	\end{equation}
\end{lemma}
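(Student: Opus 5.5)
To obtain \eqref{eq:local-mass-sde}, I will integrate the mild identity \eqref{eq:mild-u} over $\D$ on each interval $[0,\zeta_n]$. Three facts about the Neumann structure make this work. First, the heat semigroup conserves mass: $\int_\D S(t)f\dd x=\int_\D f\dd x$. Second, testing the weak divergence \eqref{eq:local-weak-divergence} against $\psi\equiv1$ gives $\int_\D S(t-s)\diver(u\nabla v)\dd x=\langle\diver(u\nabla v),S(t-s)^*1\rangle=\langle\diver(u\nabla v),1\rangle=0$. Third, for the stochastic term, the functional $f\mapsto\int_\D f\dd x$ is bounded on $L^q$, so it commutes with the $L^q$-valued It\^o integral, and $\int_\D S(t-s)u(s)^r\dd x=H_r(s)$. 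Before $\zeta_n$ the integrands are bounded, because $u$ is continuous in $C(\overline\D)$ and $\nabla v$ is bounded by Proposition~\ref{prop:local}. Hence all interchanges are legitimate, and $M(t\wedge\zeta_n)=M_0+\sigma_0\int_0^{t\wedge\zeta_n}H_r\dd W$. Letting $n\to\infty$ gives \eqref{eq:local-mass-sde} for $t<\zeta$. The upshot is that $M$ is a nonnegative continuous local martingale on $[0,\zeta)$.

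For \eqref{eq:local-mass-control}, I will first handle $\sup_{t<\zeta} M(t)$ using nonnegativity. A nonnegative continuous local martingale is a supermartingale, so the maximal inequality gives $\Prob(\sup_{t\le\zeta_n}M(t)\ge K)\le M_0/K$ uniformly in $n$. Letting $n\to\infty$ and then $K\to\infty$ yields $\sup_{t<\zeta}M<\infty$ almost surely. An equivalent route is to extend the local martingale by time change: set $\tilde M$ equal to $M$ stopped at $\zeta$ and use Dambis--Dubins--Schwarz, under which $M=M_0+\beta_{\langle M\rangle}$ for a Brownian motion $\beta$ run up to $\langle M\rangle_\zeta$.

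The main obstacle is the integral bound $\int_0^\zeta H_r^2\dd t<\infty$, that is, finiteness of the quadratic variation $\langle M\rangle_\zeta=\sigma_0^2\int_0^\zeta H_r^2\dd t$. I plan to use the standard dichotomy for continuous local martingales: almost surely, either $\langle M\rangle_\zeta<\infty$ and $\lim_{t\uparrow\zeta}M(t)$ exists finitely, or $\langle M\rangle_\zeta=\infty$ and $\liminf_{t\uparrow\zeta} M(t)=-\infty$. Concretely, through the time change, $\beta$ is a Brownian motion on $[0,\langle M\rangle_\zeta)$, and on the event $\{\langle M\rangle_\zeta=\infty\}$ it would take arbitrarily negative values. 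That contradicts $M\ge0$, which holds because $u\ge0$. Hence $\langle M\rangle_\zeta<\infty$ almost surely. Some care is needed to define this on $[0,\zeta)$ with random $\zeta$; I will apply Dambis--Dubins--Schwarz to the process stopped at $\zeta_n$ and pass to the limit, enlarging the probability space if necessary. An alternative that avoids the time change is to apply It\^o's formula to $\log(1+M)$. This gives $\log(1+M(t))=\log(1+M_0)+\int\frac{\sigma_0H_r}{1+M}\dd W-\frac12\int\frac{\sigma_0^2H_r^2}{(1+M)^2}\dd s$. Combining this with $\sup M<\infty$ and the same dichotomy applied to the martingale term controls $\int H_r^2/(1+\sup M)^2$, and therefore controls $\int_0^\zeta H_r^2$.
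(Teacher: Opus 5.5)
Your proposal is correct. The derivation of \eqref{eq:local-mass-sde} and the bound on $\sup_{t<\zeta}M(t)$ match the paper's argument in substance. You integrate the mild identity, using mass conservation of the Neumann semigroup and $\langle\diver F,1\rangle=0$, and apply the maximal inequality to $M^{\zeta_n}$. The paper tests the stopped weak equation against $1$ and uses optional stopping at $\theta_K=\inf\{t<\zeta:M(t)\ge K\}$.

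Where you genuinely differ is the bracket bound $\int_0^\zeta H_r^2\dd t<\infty$. The paper stays elementary. On $[0,T\wedge\zeta_n\wedge\theta_K]$ one has $0\le M\le K$, so It\^o's isometry gives
\[
\sigma_0^2\E\int_0^{T\wedge\zeta_n\wedge\theta_K}H_r(s)^2\dd s=\E M(T\wedge\zeta_n\wedge\theta_K)^2-M_0^2\le K^2 .
\]
Monotone convergence then gives $\int_0^{\zeta\wedge\theta_K}H_r^2<\infty$ almost surely. Almost every path has $\sup M<K$ for some integer $K$, hence $\theta_K=\infty$, and the claim follows.

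You instead invoke the dichotomy that a continuous local martingale with infinite bracket has $\liminf=-\infty$, which nonnegativity rules out. This is conceptually clean. On the random interval $[0,\zeta)$, however, it costs the technicalities you flag: consistency of the Dambis--Dubins--Schwarz Brownian motions across the $\zeta_n$, existence of the limit on $\{\langle M\rangle_\zeta<\infty\}$ so that you can extend, and enlargement of the space. The paper's stopping argument is essentially the proof of that dichotomy specialized to a process bounded below. It also yields a quantitative $L^1$ bound on the stopped bracket.

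Your $\log(1+M)$ alternative does not need the dichotomy at all. After localizing at $\zeta_n$ the stochastic integral is a true martingale. Taking expectations and using $\log(1+M)\ge0$ gives
\[
\E\int_0^\zeta\frac{\sigma_0^2H_r(s)^2}{(1+M(s))^2}\dd s\le2\log(1+M_0).
\]
Combined with $\sup_{t<\zeta}M(t)<\infty$, this finishes the proof even more directly than either route.
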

\begin{proof}
	For integers $n>\|u_0\|_\infty$, define the bounded localizing times
	\begin{equation}\label{eq:local-mass-stops}
		\zeta_n=n\wedge\inf\{t<\zeta:\|u(t)\|_\infty\ge n\}.
	\end{equation}
	Then,  $\zeta_n\uparrow\zeta$, with $\zeta_n<\zeta$ on $\{\zeta<\infty\}$.
	Testing the stopped weak equation  \eqref{eq:model} against the constant function $1$ proves \eqref{eq:local-mass-sde}: the Neumann diffusion and divergence terms integrate to zero.

	For $K>M_0$, set
	$\theta_K=\inf\{t<\zeta:M(t)\ge K\}$, with $\inf\varnothing=\infty$. Since $M(\theta_K)=K$ on $\{\theta_K\le T\wedge\zeta_n\}$, optional stopping gives
	\[
	K\Prob(\theta_K\le T\wedge\zeta_n)
	\le \E M(T\wedge\zeta_n\wedge\theta_K)=M_0.
	\]
	Letting $n\to\infty$ and then $T\to\infty$ yields
	$\Prob(\sup_{t<\zeta}M(t)>K)\le M_0/K$.
	Consequently, the mass is bounded on almost every path.
	It\^o's isometry also gives
	\[
	\sigma_0^2\E\int_0^{T\wedge\zeta_n\wedge\theta_K}H_r(s)^2\dd s
	=\E M(T\wedge\zeta_n\wedge\theta_K)^2-M_0^2\le K^2.
	\]
	Monotone convergence, first in $n$ and then in $T$, shows that
	$\int_0^{\zeta\wedge\theta_K}H_r(s)^2\dd s<\infty$
	almost surely for each fixed $K>M_0$. Almost every path satisfies
	$\sup_{t<\zeta}M(t)<K$ for one such integer, for which
	$\theta_K=\infty$. This proves \eqref{eq:local-mass-control}.
\end{proof}

\section{Scalar comparison}\label{sec:comparison}
\begin{proposition}[Scalar supersolution]\label{lem:comparison}
	Assume either $r>3/2$, or $\tau=0$, $r=3/2$, and $\sigma_0^2\ge2\chi$.
	Let $(u,v)$ be the maximal local solution with lifetime $\zeta$.
	Set $a=0$ if $\tau=0$, and, if $\tau>0$, set
	\begin{equation}\label{eq:a}
		\eta(t)=\|(v_t(t))_-\|_\infty,
		\qquad a(t)=\chi\tau\eta(t),\qquad t<\zeta,
	\end{equation}
	where $z_-:=\max\{-z,0\}$. The scalar equation
	\begin{equation}\label{eq:scalar}
		\mathrm dU=(\chi U^2+a(t)U)\,\mathrm dt+\sigma_0U^r\,\mathrm dW_t,
		\qquad U(0)=U_0:=1+\|u_0\|_\infty,
	\end{equation}
	has a nonnegative solution on every compact subinterval of $[0,\zeta)$, and
	\begin{equation}\label{eq:domination}
		0\le u(t,x)\le U(t),\qquad t<\zeta,\quad x\in\overline\D.
	\end{equation}
	When $a=0$, $U$ is defined on $[0,\infty)$ almost surely.
\end{proposition}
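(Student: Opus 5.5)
The proof has two independent parts. First, construct the scalar process $U$ and show it neither explodes nor hits zero before $\zeta$. Second, prove \eqref{eq:domination} by an $L^2$ estimate for the positive part of $u-U$. Throughout, localize with the bounded stopping times $\zeta_n$ of \eqref{eq:local-mass-stops}. By Proposition~\ref{prop:local}, before $\zeta_n$ we have $u\in L^2(0,\zeta_n;H^1)$, $\nabla v$ bounded, and (for $\tau>0$) $v\in C^1([0,\zeta_n];C)$. In particular the coefficient $a(t)=\chi\tau\|(v_t)_-\|_\infty$ is adapted, continuous and bounded on $[0,\zeta_n]$.

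\textbf{The scalar equation.} The coefficients of \eqref{eq:scalar} are locally Lipschitz on $(0,\infty)$ and $a$ is locally bounded. Hence there is a unique solution up to the exit time of $U$ from $[1/N,N]$, stopped further at $\zeta_n$. To show $U$ does not exit, apply It\^o's formula to $\log U$:
\[
\mathrm d\log U=\Bigl(\chi U+a(t)-\tfrac{\sigma_0^2}{2}U^{2r-2}\Bigr)\mathrm dt+\sigma_0U^{r-1}\mathrm dW_t .
\]
Bound the drift above:
\begin{itemize}[leftmargin=*]
\item If $r>3/2$, then $2r-2>1$, so $\chi U-\frac{\sigma_0^2}2U^{2r-2}\le C$.
\item If $\tau=0$, $r=3/2$ and $\sigma_0^2\ge2\chi$, then $a=0$ and $\chi U-\frac{\sigma_0^2}{2}U\le0$.
\end{itemize}
In both cases the drift is at most $C+a(t)$. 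Taking expectations of $\log U$ stopped at the level $N$ and at $\zeta_n$ gives
\[
\log N\,\Prob(\text{reach }N\text{ before }t\wedge\zeta_n)\le \log U_0+Ct+\E\int_0^{t\wedge\zeta_n}a\,\mathrm ds<\infty .
\]
Here $\int_0^{\zeta_n}a$ is bounded, or one localizes further on $\{\int a\le K\}$. Letting $N\to\infty$ excludes explosion before $\zeta_n$, and letting $n\to\infty$ excludes it on compacts of $[0,\zeta)$. For $a=0$ the same argument applies directly on $[0,\infty)$. Lower exits are excluded similarly. Since $U\ge0$ and the drift is nonnegative, one can use that $\log U$ has drift bounded below by $-\frac{\sigma_0^2}{2}U^{2r-2}$, which is bounded near $0$, and a quadratic variation that is bounded near $0$. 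Consequently $\log U$ cannot reach $-\infty$ in finite time.

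\textbf{The comparison.} Rewrite the chemotactic term using the signal equation, $\Delta v=\tau v_t+v-u$:
\[
-\chi\diver(u\nabla v)=-\chi\nabla u\cdot\nabla v+\chi u^2-\chi uv-\chi\tau uv_t
\le-\chi\nabla u\cdot\nabla v+\chi u^2+a(t)u .
\]
This uses $u,v\ge0$. Set $w=u-U$ and apply It\^o's formula to $\Phi(t)=\int_\D w_+^2\,\mathrm dx$ up to $\zeta_n$ and before $U$ leaves $[1/N,N]$. The contributions are controlled as follows.
\begin{itemize}[leftmargin=*]
\item The diffusion term gives $-2\int|\nabla w_+|^2\le0$.
\item The gradient term is handled through $\int w_+\nabla u\cdot\nabla v=\tfrac12\int\nabla(w_+^2)\cdot\nabla v=-\tfrac12\int w_+^2\Delta v$, using the Neumann conditions. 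This is bounded by $C_n\Phi$, since $\Delta v=\tau v_t+v-u$ is bounded before $\zeta_n$.
\item The reaction terms $\chi(u^2-U^2)+a(u-U)$ and the It\^o correction $\sigma_0^2(u^r-U^r)^2\ind_{\{w>0\}}$ are each bounded by $C_{n,N}w_+^2$, because $u,U\le n\vee N$.
\end{itemize}
Since $\Phi(0)=0$ (as $U_0>\|u_0\|_\infty$), taking expectations and applying Gronwall gives $\Phi\equiv0$. Letting $N,n\to\infty$ yields \eqref{eq:domination}; nonnegativity of $u$ is already known.

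\textbf{Main obstacle.} I expect the main obstacle to be the rigorous justification of It\^o's formula for $\int(u-U)_+^2$ when $u$ is only a mild solution. My plan is to regularize $u_\varepsilon=\varepsilon^{-1}(\varepsilon^{-1}-\Delta_N)^{-1}$-type resolvent approximations, i.e.\ $u_m=m(m-\Delta_N)^{-1}u$. The regularized process satisfies a strong It\^o equation in $L^2$ whose drift involves $\Delta_N u_m$ and $m(m-\Delta_N)^{-1}\diver(u\nabla v)$. Next, apply It\^o's formula to a $C^2$ convex approximation of $z\mapsto z_+^2$. Finally, pass $m\to\infty$ using $u\in L^2(0,\zeta_n;H^1)$, bounded $\nabla v$, and the commutator estimates. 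All constants are uniform before $\zeta_n$, so this limit should be routine but technical.
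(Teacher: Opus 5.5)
Your comparison step is the paper's argument. It uses the same bound $-\chi u\Delta v\le\chi u^2+a(t)u$ from the signal equation and $u,v\ge0$, and the same positive-part $L^2$ estimate for $(u-U)_+$, with the transport term integrated by parts using $\partial_\nu v=0$. It also concludes the same way, by Gronwall from $(u_0-U_0)_+=0$.

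The gap is in the scalar nonexplosion step. From It\^o's formula for $\log U$ you read off
\[
\log N\,\Prob\bigl(\text{$U$ reaches $N$ before $t\wedge\zeta_n$}\bigr)\le\log U_0+Ct+\E\int_0^{t\wedge\zeta_n}a\,\mathrm ds .
\]
This Chebyshev-type step needs the Lyapunov function to be nonnegative on the event where level $N$ is not reached, and $\log U$ is not. Although $U_0\ge1$, the noise can push $U$ below $1$, and at your lower stopping level $1/N$ one has $\log U=-\log N$. What It\^o's formula actually gives is the displayed bound plus $\E(\log U(\sigma))_-$ on the right. Nothing in your argument controls this term uniformly in $N$. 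Bounding it by $\log N$ via the lower stop only yields $\Prob(\text{reach }N)\le\tfrac12+O(1/\log N)$, which does not tend to zero. Your separate argument that $U$ cannot reach $0$ is correct, but it does not address this.

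The repair is to use a Lyapunov function that is bounded below, e.g.\ $\log(1+U)$. This also makes the lower stopping unnecessary, since $U\ge0$ already follows from both scalar coefficients vanishing at $0$. Its generator satisfies
\[
\frac{\chi U^2+a(t)U}{1+U}-\frac{\sigma_0^2U^{2r}}{2(1+U)^2}\le C+a(t)
\]
when $r>3/2$, since $2r-2>1$. It is at most $\chi$ when $r=3/2$, $a=0$, $\sigma_0^2\ge2\chi$. This is exactly the paper's $V_0$ in the critical case; for $r>3/2$ the paper uses $(1+U)^\lambda$ with $0<\lambda<1$, which it needs later for the polynomial tail bounds. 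With this change, your localisation on $\{\int_0^t a\le K\}$ (or the paper's $\kappa_M=\inf\{t:a(t)>M\}$) and the limits $N\to\infty$, $K\to\infty$, $n\to\infty$ go through.

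One smaller point concerns the Gronwall step. For $\tau>0$ your constants $C_n$, $C_{n,N}$ are random, because $\Delta v=\tau v_t+v-u$ and $a$ depend on H\"older norms of $u$. They are only pathwise bounded before $\zeta_n$. Either localise further so that they are bounded by a deterministic $K$ (the paper's $\vartheta_K$), or observe that $\exp\bigl(-\int_0^tC(s)\,\mathrm ds\bigr)\|(u-U)_+(t)\|_2^2$ is a nonnegative local supermartingale starting from $0$.
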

\begin{proof}
	The proof consists of a scalar nonexplosion estimate and a positive-part energy estimate.
	
	\emph{Step 1: scalar nonexplosion.}
Proposition~\ref{prop:local} implies that $a$ is adapted and bounded on compact time intervals before $\zeta$, pathwise. Therefore, the coefficients $x\mapsto\chi x^2+a(t)x$ and $x\mapsto\sigma_0(x_+)^r$ are locally Lipschitz in $x$ after localization of $a$.
Equation~\eqref{eq:scalar} therefore has a unique local solution $U$. Since the coefficients vanish at zero, the comparison theorem for scalar SDEs gives $U\ge0$.

	Assume first that $r>3/2$. For $0<\lambda<1$, let $V(x)=(1+x)^\lambda$.
	The generator applied to $V$ is 
	\begin{equation}\label{eq:generator}
		\mathcal L_aV(x)
		=\lambda(1+x)^{\lambda-1}(\chi x^2+ax)
		-\frac{\sigma_0^2\lambda(1-\lambda)}2
		(1+x)^{\lambda-2}x^{2r}.
	\end{equation}
	For $x\ge1$ and $0\le a\le M$,
	\begin{equation}\label{eq:detail-scalar-orders}
		\mathcal L_aV(x)
		\le\lambda\chi x^{\lambda+1}
		+\lambda Mx^\lambda-c_\lambda x^{\lambda+2r-2},
		\qquad
		c_\lambda=2^{\lambda-3}\sigma_0^2\lambda(1-\lambda)>0.
	\end{equation}
	Since $2r-2>1$, the right-hand side is bounded above, uniformly for $a\in[0,M]$. Thus
	\[
	\sup_{x\ge0,\,0\le a\le M}\mathcal L_aV(x)\le C_M.
	\]
	Let \(\xi\) denote the maximal lifetime of the scalar solution.
	For \(L>U_0\) and \(M>0\), define
	\[
	\widehat\rho_L=\inf\{t\in[0,\xi):U(t)\ge L\},
	\qquad
	\kappa_M=\inf\{t\in[0,\zeta):a(t)>M\},
	\]
	with \(\inf\varnothing=\infty\).
	Fix an interior localizing time \(\zeta_m\) from
	\eqref{eq:local-mass-stops} and set
	\[
	\sigma_{L,M,m}=T\wedge\zeta_m\wedge\widehat\rho_L\wedge\kappa_M.
	\]
	The scalar continuation criterion ensures that these stopped values
	are defined. It\^o's formula gives
	\begin{align}\label{eq:detail-scalar-local-exit}
		\E V(U(\sigma_{L,M,m}))
		&=V(U_0)+\E\int_0^{\sigma_{L,M,m}}\mathcal L_aV(U(t))\dd t
		\le V(U_0)+C_MT,\notag\\
		\Prob\left(\widehat\rho_L\le T\wedge\zeta_m\wedge\kappa_M\right)
		&\le\frac{V(U_0)+C_MT}{(1+L)^\lambda}
		\longrightarrow0
		\qquad(L\to\infty).\notag
	\end{align}
	For fixed \(m\), the coefficient \(a\) is bounded on
	\([0,\zeta_m]\) almost surely. Letting first \(L\to\infty\)
	and then \(M\to\infty\) therefore excludes scalar explosion
	before \(T\wedge\zeta_m\). Finally, letting \(m\to\infty\)
	proves existence on every compact subinterval of \([0,\zeta)\).
	If \(a=0\), the scalar equation is defined for all times, and
	the same estimate without \(\zeta_m\) or \(\kappa_M\) proves
	nonexplosion on every deterministic finite interval.

	For $r=3/2$, $a=0$, and $\sigma_0^2\ge2\chi$, use $V_0(x)=\log(1+x)$ instead. Direct differentiation gives
	\begin{equation}\label{eq:critical-generator}
		\mathcal L_0V_0(x)
		=\frac{\chi x^2}{1+x}-\frac{\sigma_0^2x^3}{2(1+x)^2}
		=\frac{x^2}{(1+x)^2}
		\left[\chi+\left(\chi-\frac{\sigma_0^2}{2}\right)x\right]
		\le\chi.
	\end{equation}
	The resulting bound is
	\[
	\Prob\left(\widehat\rho_L\le T\right)
	\le\frac{\log(1+U_0)+\chi T}{\log(1+L)}\longrightarrow0,
	\]
	which proves scalar nonexplosion also in this case.
	
	\emph{Step 2: comparison.}
	The signal equation and $u,v\ge0$ imply
	\begin{equation}\label{eq:laplace-bound}
		-\chi\Delta v
		=\begin{cases}
			\chi u-\chi v,&\tau=0,\\
			\chi u-\chi v-\chi\tau v_t,&\tau>0,
		\end{cases}
		\qquad -\chi\Delta v\le\chi u+a(t).
	\end{equation}
	Define the nonpositive remainder
	\[
	\mathcal R(t,x):=u(t,x)\bigl[-\chi\Delta v(t,x)-\chi u(t,x)-a(t)\bigr]\le0.
	\]
	For $w=u-U$ and $z=w_+$, the difference equation is
	\begin{align*}
		\mathrm dw={}&\bigl[\Delta w-\chi\nabla v\cdot\nabla w
		+\chi(u^2-U^2)+a(t)w+\mathcal R\bigr]\dd t
		+\sigma_0(u^r-U^r)\dd W_t.
	\end{align*}
	Choose auxiliary stopping times $\vartheta_K$ increasing to the joint lifetime so that $u,U,a,\nabla v$, and $\Delta v$ are bounded by $K$ before $\vartheta_K$.
	The regularity in Proposition~\ref{prop:local} allows us to apply It\^o's formula by smooth convex approximation. Before $\vartheta_K$, we obtain
	\begin{align}\label{eq:detail-positive-ito}
		\mathrm d\|z\|_2^2
		={}&-2\|\nabla z\|_2^2\dd t
		-2\chi\int_\D z\nabla v\cdot\nabla z\dd x\dd t\notag\\
		&+2\chi\int_\D z(u^2-U^2)\dd x\dd t
		+2a(t)\|z\|_2^2\dd t+2\int_\D z\mathcal R\dd x\dd t\notag\\
		&+\sigma_0^2\int_\D\ind_{\{u>U\}}(u^r-U^r)^2\dd x\dd t
		+2\sigma_0\left(\int_\D z(u^r-U^r)\dd x\right)\dd W_t.
	\end{align}
	For the terms on the right-hand side, we have
	\begin{align*}
		-2\chi\int_\D z\nabla v\cdot\nabla z\dd x
		&=\chi\int_\D\Delta v\,z^2\dd x
		\le\chi K\|z\|_2^2,\\
		2\chi\int_\D z(u^2-U^2)\dd x
		&=2\chi\int_\D(u+U)z^2\dd x\le4\chi K\|z\|_2^2,\\
		2a(t)\|z\|_2^2+2\int_\D z\mathcal R\dd x
		&\le2K\|z\|_2^2,\\
		\sigma_0^2\int_\D\ind_{\{u>U\}}(u^r-U^r)^2\dd x
		&\le\sigma_0^2r^2K^{2r-2}\|z\|_2^2.
	\end{align*}
	The first line uses $\partial_\nu v=0$; the last uses the mean value theorem on $[0,K]$. Hence it follows from \eqref{eq:detail-positive-ito} that
	\begin{equation}\label{eq:positive-comparison}
		\mathrm d\|z\|_2^2+2\|\nabla z\|_2^2\dd t
		\le C_K\|z\|_2^2\dd t
		+2\sigma_0\left(\int_\D z(u^r-U^r)\dd x\right)\dd W_t.
	\end{equation}
Since $z(0)=0$, taking expectation yields
	\[
	0\le\E\|z(t\wedge\vartheta_K)\|_2^2
	\le C_K\int_0^t\E\|z(s\wedge\vartheta_K)\|_2^2\dd s=0
	\]
	by Gronwall's inequality. Removing $\vartheta_K$ and using spatial and temporal continuity proves~\eqref{eq:domination}.
\end{proof}

\section{The parabolic--elliptic system}\label{sec:parabolic-elliptic}
\begin{proof}[Proof of Theorem~\ref{thm:global} for $\tau=0$]
	For $a=0$ and $0<\lambda<1$, \eqref{eq:generator} gives $\mathcal L_0V\le C$.
	Writing $\rho_L=\inf\{t\ge0:U(t)\ge L\}$, we obtain
	\begin{align}\label{eq:detail-pe-exit}
		(1+L)^\lambda\Prob\left(\rho_L\le T\right)
		&\le\E(1+U(T\wedge\rho_L))^\lambda\notag\\
		&\le(1+U_0)^\lambda+CT.
	\end{align}
	The scalar process is global by Proposition~\ref{lem:comparison}. On the event $\{\zeta\le T\}$,
	\[
	\limsup_{t\uparrow\zeta}\|u(t)\|_\infty=\infty,
	\qquad
	\sup_{0\le t<\zeta}\|u(t)\|_\infty
	\le\sup_{0\le t\le T}U(t)<\infty,
	\]
	which is impossible. Thus $\Prob\left(\zeta\le T\right)=0$ for each $T<\infty$.
	After continuation,
	\[
	\left\{\sup_{0\le t\le T}\|u(t)\|_\infty\ge R\right\}
	\subseteq\left\{\sup_{0\le t\le T}U(t)\ge R\right\}
	=\{\rho_R\le T\}.
	\]
	Estimate~\eqref{eq:detail-pe-exit} proves \eqref{eq:pe-tail}. Nonnegativity and pathwise uniqueness follow from Proposition~\ref{prop:local}.
\end{proof}

\begin{proof}[Proof of Theorem~\ref{thm:critical}\ref{item:critical-pe}]
	For $V_0(x)=\log(1+x)$, \eqref{eq:critical-generator} yields
	\begin{align*}
		\log(1+L)\Prob\left(\rho_L\le T\right)
		&\le\E\log(1+U(T\wedge\rho_L))\\
		&\le\log(1+U_0)+\chi T.
	\end{align*}
	Letting $L\to\infty$ proves nonexplosion of $U$. Comparison and the continuation criterion then give $\zeta=\infty$ almost surely, exactly as in the preceding proof. With $L=R>U_0$, the same inequalities give \eqref{eq:critical-pe-tail}.
\end{proof}

\section{The fully parabolic system}\label{sec:global}
\subsection{Stopped estimates}\label{sec:stopping}
Assume throughout this section that $\tau>0$ and $r>3/2$.
For the scalar process in \eqref{eq:scalar}, define
\begin{equation}\label{eq:rho}
	\rho_L=\inf\{t\in[0,\zeta):U(t)\ge L\},
	\qquad L>U_0,\qquad\inf\varnothing=\infty.
\end{equation}
The comparison \eqref{eq:domination} implies that
\begin{equation}\label{eq:detail-stopping-validity}
	\zeta<\infty
	\quad\Longrightarrow\quad
	\limsup_{t\uparrow\zeta}U(t)=\infty
	\quad\Longrightarrow\quad
	\rho_L<\zeta\quad\text{for every }L>U_0.
\end{equation}
Observe that, for $t<\rho_L$,
\begin{equation}\label{eq:detail-stopped-amplitude}
	0\le u(t,x)\le U(t)<L.
\end{equation}

\begin{lemma}[Spatial regularity]\label{lem:holder}
	Let $\alpha=1/4$, $\beta=\max\{2,r\}$, and $1<p<2$. Then
	\begin{equation}\label{eq:holder}
		\E\int_0^{T\wedge\rho_L}\|u(t)\|_{C^\alpha}^p\dd t
		\le C_T(1+L^{\beta p}).
	\end{equation}
\end{lemma}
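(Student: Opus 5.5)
We estimate $u(t)$ for $t<T\wedge\rho_L$ term by term in the mild identity \eqref{eq:mild-u}:
\[
u(t)=S(t)u_0-\chi\int_0^tS(t-s)\diver\bigl(u\nabla v\bigr)(s)\dd s+\sigma_0Z(t).
\]
Before $\rho_L$ we have $0\le u<L$ by \eqref{eq:detail-stopped-amplitude}. Since $u_0$ is only continuous, the first term is handled through the time integral. Interpolating between $\|S(t)u_0\|_\infty\le\|u_0\|_\infty$ and $\|S(t)u_0\|_{W^{1,\infty}}\le Ct^{-1/2}\|u_0\|_\infty$ gives
\[
\|S(t)u_0\|_{C^\alpha}\le C_Tt^{-\alpha/2}\|u_0\|_\infty=C_Tt^{-1/8}\|u_0\|_\infty .
\]
Because $p<2$, we have $p/8<1$, so $\int_0^T t^{-p/8}\dd t<\infty$ and this contributes only a constant $C_T$. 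This is where the restriction $p<2$ is comfortably satisfied; the interpolation exponent is the only point to check.

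For the drift we need $\|\nabla v(s)\|_\infty$ for $s<\rho_L$. Using \eqref{eq:mild-v} and \eqref{eq:heat-grad},
\[
\|\nabla v(s)\|_\infty\le C_T\|v_0\|_{C^1}+C_T\int_0^s(s-\sigma)^{-1/2}\|u(\sigma)\|_\infty\dd\sigma\le C_T(1+L).
\]
Hence $\|u\nabla v\|_\infty\le C_TL(1+L)$ before $\rho_L$. By \eqref{eq:heat-div} with $(1+\alpha)/2=5/8<1$, the drift convolution is then bounded in $C^\alpha$ by $C_T(1+L^2)$, uniformly in $t<T\wedge\rho_L$. We must use the stopped integrand $\ind_{\{s<\rho_L\}}u\nabla v$ so that the bound is pathwise and holds uniformly up to the stopping time.

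For the stochastic term we apply Remark~\ref{rem:dimension} with $\vartheta=\rho_L\wedge\zeta_m$ and the integrand $g=u^r$, which satisfies $\|g^\vartheta\|_\infty\le L^r$. Estimate \eqref{eq:stoch-bound} gives $\E\sup_{t\le T\wedge\vartheta}\|Z(t)\|_{C^\alpha}^q\le C_TL^{rq}$, and Jensen's inequality (since $p<q$) yields
\[
\E\sup_{t\le T\wedge\vartheta}\|Z\|_{C^\alpha}^p\le C_TL^{rp}.
\]
By locality of the stochastic integral, $Z^\vartheta=Z$ on $[0,\vartheta]$. Here $\zeta_m$ is needed because $\rho_L$ may equal $\zeta$ only in the limit; by \eqref{eq:detail-stopping-validity}, $\rho_L<\zeta$ whenever $\zeta<\infty$, so monotone convergence in $m$ removes $\zeta_m$.

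Summing the three contributions, raising to the power $p$, and integrating over $[0,T\wedge\rho_L]$ gives the bound $C_T(1+L^{2p}+L^{rp})\le C_T(1+L^{\beta p})$ with $\beta=\max\{2,r\}$. We expect the main obstacle to be bookkeeping rather than analysis: the stopped extensions and localizing times have to be arranged so that the mild identity, the gradient bound, and the stochastic estimate all hold simultaneously up to $T\wedge\rho_L$, while the singular initial term at $t=0$ is absorbed by the time integral.
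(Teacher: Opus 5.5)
Your proposal is correct and follows essentially the same route as the paper. Both split the mild formula into three terms: the initial term, controlled by the $t^{-\alpha/2}$ smoothing bound for $S(t)u_0$ and integrable because $p<2$; the drift, controlled by the $C_T(1+L)$ gradient bound, which gives a $C_T(1+L^2)$ estimate via \eqref{eq:heat-div}; and the stochastic term, controlled by Lemma~\ref{lem:stoch} plus Jensen. The only difference is bookkeeping: the paper builds a globally defined stopped extension $\widetilde u_L$ on $[0,T]$ with every integrand cut off by $\ind_{s<\rho_L}$, which makes your auxiliary localization $\zeta_m$ and the monotone-convergence step unnecessary.
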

\begin{proof}
	Let $B_C$ be the realization of $B$ on $C$.
	The Neumann resolvent estimate and $v_0\in C^2(\overline\D)$,
	$\partial_\nu v_0=0$, give $v_0\in D(B_C)$ and
	\[
	\sup_{t\le T}\|\nabla S_\tau(t)v_0\|_\infty
	\le C_T\bigl(\|v_0\|_\infty+\|B_Cv_0\|_\infty\bigr).
	\]
	Thus, for $t<T\wedge\rho_L$, the variation-of-constants formula gives
	\begin{align*}
	\|\nabla v(t)\|_\infty
	&\le C_T+C_TL\int_0^t(t-s)^{-1/2}\dd s
	\le C_T(1+L).
	\end{align*}
	Consequently,
	\begin{equation}\label{eq:gradient-L}
	\sup_{t<T\wedge\rho_L}\|\nabla v(t)\|_\infty\le C_T(1+L),
	\qquad
	\ind_{t<\rho_L}\|u(t)\nabla v(t)\|_\infty\le C_T(1+L^2).
	\end{equation}
	Set, on $[0,T]$,
	\begin{align}\label{eq:detail-extended-mild}
	\widetilde u_L(t)={}&S(t)u_0+D_L(t)+Z_L(t),\notag\\
	D_L(t):={}&-\chi\int_0^tS(t-s)\diver\bigl(\ind_{s<\rho_L}u(s)\nabla v(s)\bigr)\dd s,\notag\\
	Z_L(t):={}&\sigma_0\int_0^tS(t-s)\ind_{s<\rho_L}u(s)^r\dd W_s.
	\end{align}
	For $t<\rho_L$, one has $\widetilde u_L(t)=u(t)$.
	For $0<t\le T$, heat-semigroup smoothing gives
	$\|S(t)u_0\|_{C^\alpha}\le C_Tt^{-\alpha/2}\|u_0\|_\infty$.
	Since $\alpha=1/4$ and $1<p<2$, it follows that
	\[
	\int_0^T\|S(t)u_0\|_{C^\alpha}^p\dd t
	\le C_T\|u_0\|_\infty^p
	\frac{T^{1-\alpha p/2}}{1-\alpha p/2}<\infty.
	\]
	The deterministic convolution satisfies
	\begin{align*}
	\sup_{t\le T}\|D_L(t)\|_{C^\alpha}
	&\le C_T(1+L^2)\sup_{t\le T}
	\int_0^t(t-s)^{-(1+\alpha)/2}\dd s\\
	&\le C_T(1+L^2).
	\end{align*}
	For the stochastic convolution, $p<q$ and Lemma~\ref{lem:stoch} give
	\[
	\E\sup_{t\le T}\|Z_L(t)\|_{C^\alpha}^p
	\le\left(\E\sup_{t\le T}\|Z_L(t)\|_{C^\alpha}^q\right)^{p/q}
	\le C_TL^{rp}.
	\]
	Combining these estimates yields
	\begin{align*}
	\E\int_0^{T\wedge\rho_L}\|u(t)\|_{C^\alpha}^p\dd t
	&\le C_p\int_0^T\|S(t)u_0\|_{C^\alpha}^p\dd t\\
	&\quad+C_pT\E\sup_{t\le T}\|D_L(t)\|_{C^\alpha}^p+C_pT\E\sup_{t\le T}\|Z_L(t)\|_{C^\alpha}^p\\
	&\le C_T(1+L^{2p}+L^{rp})
	\le C_T(1+L^{\beta p}).
	\end{align*}
\end{proof}
Recall the random reaction coefficient $a$ defined in \eqref{eq:a}. We have the following estimate.
\begin{lemma}[Logarithmic coefficient bound]\label{lem:a}
	For every $1<p<2$ and all sufficiently large $L$,
	\begin{equation}\label{eq:a-log}
		\E\int_0^{T\wedge\rho_L}a(t)^p\dd t
		\le C_T+C_T(1+\log L)^p
		\E\int_0^{T\wedge\rho_L}U(t)^p\dd t.
	\end{equation}
\end{lemma}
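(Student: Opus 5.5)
We plan to apply Lemma~\ref{lem:log} to the signal with a stopped source, and to choose the splitting parameter $\varepsilon$ depending on $L$ so that the Hölder contribution becomes negligible. On $[0,T]$ set $f_L(s)=\ind_{\{s<\rho_L\}}u(s)$ and let $w_L$ be the associated variation-of-constants solution with initial datum $v_0$. Because the convolution is causal, $w_L=v$ on $[0,T\wedge\rho_L]$, and therefore $(w_L)_t=v_t$ there. Indeed, Proposition~\ref{prop:local} makes $v$ a $C^1$ function before $\zeta$, and $\rho_L<\zeta$ whenever $\zeta<\infty$ by \eqref{eq:detail-stopping-validity}. Since $a(t)=\chi\tau\|(v_t)_-\|_\infty\le\chi\tau\|v_t(t)\|_\infty$, we get
\[
\int_0^{T\wedge\rho_L}a(t)^p\dd t\le(\chi\tau)^p\|(w_L)_t\|_{L^p(0,T;C)}^p .
\]
We have $v_0\in D(B_C)$ as in the proof of Lemma~\ref{lem:holder}. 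The hypothesis $f_L\in L^p(0,T;C^\alpha)$ of Lemma~\ref{lem:log} holds almost surely, since the expectation in \eqref{eq:holder} is finite.

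Applying \eqref{eq:log-signal} pathwise, raising to the power $p$, and taking expectations gives
\[
\E\int_0^{T\wedge\rho_L}a^p\dd t\le C_T\|Bv_0\|_\infty^p+C_T(1+|\log\varepsilon|)^p\,\E\int_0^{T\wedge\rho_L}\|u\|_\infty^p\dd t+C_T\varepsilon^{\alpha p/2}\,\E\int_0^{T\wedge\rho_L}\|u\|_{C^\alpha}^p\dd t .
\]
The essential point is that $\varepsilon$ must be deterministic and chosen before taking expectations, which is allowed because the constants in Lemma~\ref{lem:log} do not depend on $\varepsilon$. For the middle term we use the domination \eqref{eq:domination}, $\|u(t)\|_\infty\le U(t)$ for $t<\rho_L$, which turns it into $\E\int_0^{T\wedge\rho_L}U^p\dd t$. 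For the last term, Lemma~\ref{lem:holder} gives the bound $C_T\varepsilon^{\alpha p/2}(1+L^{\beta p})$.

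Now take $\varepsilon=L^{-2\beta/\alpha}=L^{-8\beta}$. This is admissible, i.e.\ $\varepsilon<\min\{1,T\}$, once $L$ is large, which is where the restriction to sufficiently large $L$ comes from. With this choice,
\[
\varepsilon^{\alpha p/2}(1+L^{\beta p})=L^{-\beta p}(1+L^{\beta p})\le 2,
\]
and $1+|\log\varepsilon|=1+8\beta\log L\le C(1+\log L)$. Collecting terms yields \eqref{eq:a-log}, with the constant $C_T$ absorbing $\|Bv_0\|_\infty^p$ and the factor $2C_T$.

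The main obstacle is the legitimacy of the first step: identifying $v_t$ with $(w_L)_t$ on the stopped interval, and using the estimate pathwise with a random $f_L$. To handle this, we note that Lemma~\ref{lem:log} is a deterministic inequality applied $\omega$ by $\omega$. The time derivative of $w_L$ on $[0,t]$ depends only on $f_L|_{[0,t]}$, as can be seen from \eqref{eq:signal-derivative}. Therefore, before $\rho_L$ it coincides almost everywhere with $v_t$. Finally, measurability of the integrals is clear because $a$ is adapted and continuous before $\zeta$.
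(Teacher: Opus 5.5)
Your proposal is correct and follows the paper's proof: you stop the source at $\rho_L$, apply Lemma~\ref{lem:log} pathwise with a deterministic $L$-dependent $\varepsilon$, and control the three resulting terms by $\|Bv_0\|_\infty$, by the domination $\|u\|_\infty\le U$ before $\rho_L$, and by Lemma~\ref{lem:holder}. The only difference is cosmetic. The paper takes $\varepsilon=c_TL^{-\kappa}$ with $\kappa>2\beta/\alpha$ and $0<c_T<\min\{1,T\}$, whereas you take the borderline choice $\varepsilon=L^{-2\beta/\alpha}$. Your choice also works, since $L^{-\beta p}(1+L^{\beta p})\le 2$, and admissibility of $\varepsilon$ then comes from taking $L$ large.
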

\begin{proof}
	Let $f_L(t)=\ind_{t<\rho_L}u(t)$ and define
	\[
	v_L(t)=S_\tau(t)v_0+\frac1\tau\int_0^tS_\tau(t-s)f_L(s)\dd s.
	\]
	For $t<\rho_L$, $v_L(t)=v(t)$ and $\partial_tv_L(t)=v_t(t)$. On the full deterministic interval, formula~\eqref{eq:signal-derivative} holds for almost every $t$:
	\[
	\partial_tv_L(t)=-S_\tau(t)Bv_0+\frac{f_L(t)}\tau
	-\frac1\tau\int_0^tBS_\tau(t-s)f_L(s)\dd s.
	\]
	Applying Lemma~\ref{lem:log} pathwise, raising its estimate to the power $p$, and using scalar domination and Lemma~\ref{lem:holder} give
	\begin{align}\label{eq:detail-a-epsilon}
		\E\int_0^{T\wedge\rho_L}a(t)^p\dd t
		\le{}&C_T+C_T(1+|\log\varepsilon|)^p
		\E\int_0^{T\wedge\rho_L}U(t)^p\dd t\notag\\
		&+C_T\varepsilon^{\alpha p/2}(1+L^{\beta p}).
	\end{align}
	Choose
	\begin{equation}\label{eq:detail-epsilon-choice}
		\varepsilon=c_TL^{-\kappa},\qquad
		\kappa>2\beta/\alpha,\qquad0<c_T<\min\{1,T\}.
	\end{equation}

	For $L\ge1$,
	\begin{align*}
		\varepsilon^{\alpha p/2}(1+L^{\beta p})
		&=c_T^{\alpha p/2}
		\bigl[L^{-\kappa\alpha p/2}+L^{p(\beta-\kappa\alpha/2)}\bigr]
		\le C_T,\\
		1+|\log\varepsilon|
		&\le1+|\log c_T|+\kappa\log L
		\le C_T(1+\log L).
	\end{align*}
	Substitution into \eqref{eq:detail-a-epsilon} proves \eqref{eq:a-log}.
\end{proof}

\subsection{Scalar energy}
\begin{lemma}\label{lem:lyapunov}
	Let $\lambda,\gamma,n,p$ be as in \eqref{eq:exponents}. There exist $c,C>0$, independent of $a\ge0$ and $x\ge0$, such that
	\begin{equation}\label{eq:lyapunov}
		\mathcal L_a(1+x)^\lambda\le C-cx^n+Ca^p.
	\end{equation}
	Consequently,
	\begin{equation}\label{eq:stopped-energy}
		\E(1+U(T\wedge\rho_L))^\lambda
		+c\E\int_0^{T\wedge\rho_L}U(t)^n\dd t
		\le C_T+C\E\int_0^{T\wedge\rho_L}a(t)^p\dd t.
	\end{equation}
\end{lemma}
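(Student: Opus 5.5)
We prove the pointwise generator inequality \eqref{eq:lyapunov} first, and then deduce \eqref{eq:stopped-energy} from It\^o's formula stopped at $\rho_L$. For the pointwise bound we start from the explicit formula \eqref{eq:generator}:
\[
\mathcal L_aV(x)=\lambda(1+x)^{\lambda-1}(\chi x^2+ax)-\tfrac{\sigma_0^2\lambda(1-\lambda)}2(1+x)^{\lambda-2}x^{2r}.
\]

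\emph{Region $0\le x\le1$.} Each factor is bounded, so $\mathcal L_aV(x)\le C+Ca\le C+Ca^p$, using $a\le1+a^p$. Since $x^n\le1$ here, this is at most $C'-cx^n+Ca^p$ after enlarging $C$.

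\emph{Region $x\ge1$.} We use \eqref{eq:detail-scalar-orders}:
\[
\mathcal L_aV\le\lambda\chi x^{\lambda+1}+\lambda a x^\lambda-c_\lambda x^{n},\qquad n=\lambda+\gamma.
\]
Because $\gamma>1$, we have $\lambda+1<n$, so $\lambda\chi x^{\lambda+1}\le \tfrac{c_\lambda}{4}x^n+C$ by Young's inequality.

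For the mixed term $a x^\lambda$ we apply Young's inequality with the conjugate pair $(p,p')$, where $p=n/\gamma$. Then $p'=n/\lambda$, which gives
\[
a x^\lambda\le \delta x^{\lambda p'}+C_\delta a^p=\delta x^{n}+C_\delta a^p.
\]
This exponent match is exactly the reason $p$ is defined as $n/\gamma$ in \eqref{eq:exponents}. Choosing $\delta$ small absorbs the term and leaves $\mathcal L_aV\le C-\tfrac{c_\lambda}{2}x^n+Ca^p$. The constants depend only on $\lambda,\chi,\sigma_0,r$, not on $a$ or $x$.

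\emph{Stopped energy.} We apply It\^o's formula to $V(U)$ up to $\sigma=T\wedge\rho_L\wedge\zeta_m$, with $\zeta_m$ the localizing times from \eqref{eq:local-mass-stops}. On $[0,\sigma]$ we have $U\le L$ and $a$ is bounded, so the stochastic integral $\int\lambda(1+U)^{\lambda-1}\sigma_0U^r\dd W$ has a bounded integrand and zero mean. Taking expectations gives
\[
\E V(U(\sigma))+c\E\int_0^\sigma U^n\dd t\le V(U_0)+CT+C\E\int_0^\sigma a^p\dd t.
\]

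The only delicate step is removing $\zeta_m$. By \eqref{eq:detail-stopping-validity}, $\rho_L<\zeta$ whenever $\zeta<\infty$, so $T\wedge\rho_L\wedge\zeta_m\uparrow T\wedge\rho_L$ as $m\to\infty$. Monotone convergence handles the two integrals. For the first term, $V(U)$ is bounded by $(1+L)^\lambda$ and $U$ is continuous at the stopped time, so bounded convergence applies (Fatou's lemma would also suffice). This yields \eqref{eq:stopped-energy}.
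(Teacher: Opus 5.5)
Your proposal is correct and follows essentially the same route as the paper: the generator bound from \eqref{eq:detail-scalar-orders}, Young's inequality with conjugate exponents $n/\lambda$ and $n/\gamma$ for the term $ax^\lambda$, a crude bound on $[0,1]$, and It\^o's formula for $(1+U)^\lambda$ stopped at $T\wedge\rho_L$. The only difference is that you first localize with $\zeta_m$ and then remove it via \eqref{eq:detail-stopping-validity}. The paper instead uses that relation to define the stopped process $U(\cdot\wedge\rho_L)$ on all of $[0,T]$ directly; both versions are valid.
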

\begin{proof}
	Recall the exponent identities
	\begin{equation}\label{eq:detail-first-young-exponents}
		n=\lambda+\gamma>\lambda+1,\qquad n-\lambda=\gamma,
		\qquad \frac{\lambda}{n}+\frac\gamma n=1,
		\qquad p=\frac n\gamma.
	\end{equation}
	For $x\ge1$, equation~\eqref{eq:generator} gives
	\[
	\mathcal L_aV(x)
	\le\lambda\chi x^{\lambda+1}+\lambda ax^\lambda-c_\lambda x^n.
	\]
	The two positive terms obey, for every $\delta>0$,
	\begin{align*}
		\lambda\chi x^{\lambda+1}
		&\le\delta x^n+C_\delta,\\
		\lambda ax^\lambda
		&\le\delta(x^\lambda)^{n/\lambda}+C_\delta a^{n/(n-\lambda)}
		=\delta x^n+C_\delta a^p.
	\end{align*}
	The second inequality is Young's inequality with conjugate exponents $n/\lambda$ and $n/\gamma$. For $0\le x\le1$,
	\[
	\mathcal L_aV(x)\le C(1+a)\le C(1+a^p).
	\]
	Choosing $\delta=c_\lambda/4$ and enlarging the constant on $[0,1]$ proves \eqref{eq:lyapunov}.
	
	For fixed $L$, define $\overline U_L(t)=U(t\wedge\rho_L)$. By \eqref{eq:detail-stopping-validity}, it is well-defined on $[0,T]$ and satisfies
	\begin{align*}
		\overline U_L(t)=U_0
		&+\int_0^t\ind_{s<\rho_L}\bigl[\chi U(s)^2+a(s)U(s)\bigr]\dd s+\sigma_0\int_0^t\ind_{s<\rho_L}U(s)^r\dd W_s.
	\end{align*}
It\^o's formula and \eqref{eq:lyapunov} give
	\[
	\E V(\overline U_L(T))
	\le V(U_0)+CT-c\E\int_0^{T\wedge\rho_L}U(s)^n\dd s
	+C\E\int_0^{T\wedge\rho_L}a(s)^p\dd s,
	\]
	which is \eqref{eq:stopped-energy}.
\end{proof}

\subsection{Global existence and tail bounds}
\begin{proof}[Proof of Theorem~\ref{thm:global} for $\tau>0$]
	For $\gamma>1$ and $0<\lambda<1$,
	\begin{equation}\label{eq:detail-absorption-gap}
		1<p=1+\frac\lambda\gamma<2,
		\qquad n-p=n\left(1-\frac1\gamma\right)>0.
	\end{equation}
	Thus,
	Lemma~\ref{lem:a} provides the estimate for the $a^p$-term
	arising from Young's inequality in the proof of
	Lemma~\ref{lem:lyapunov}. Combining it with \eqref{eq:stopped-energy} gives
	\begin{align}\label{eq:before-absorb}
		\E(1+U(T\wedge\rho_L))^\lambda
		+c\E\int_0^{T\wedge\rho_L}U(t)^n\dd t
		\le C_T+C_T(1+\log L)^p
		\E\int_0^{T\wedge\rho_L}U(t)^p\dd t.
	\end{align}
	Young's inequality with conjugate exponents $n/p$ and $n/(n-p)$ gives
	\begin{equation}\label{eq:young-log}
		C_T(1+\log L)^p x^p
		\le\frac c2(x^p)^{n/p}
		+C_T\bigl[(1+\log L)^p\bigr]^{n/(n-p)}
		=\frac c2x^n+C_T(1+\log L)^{pn/(n-p)}.
	\end{equation}
Here $pn/(n-p)=n/(\gamma-1)=\nu_*$.

	Combining \eqref{eq:before-absorb} and \eqref{eq:young-log} gives
	\begin{equation}\label{eq:energy-final}
		\E(1+U(T\wedge\rho_L))^\lambda
		+\frac c2\E\int_0^{T\wedge\rho_L}U(t)^n\dd t
		\le C_T\bigl[1+(1+\log L)^{\nu_*}\bigr].
	\end{equation}
	Since $U(\rho_L)=L$ on $\{\rho_L\le T\}$,
	\begin{equation}\label{eq:rho-tail}
		\Prob\left(\rho_L\le T\right)
		\le\frac{\E(1+U(T\wedge\rho_L))^\lambda}{(1+L)^\lambda}
		\le C_T\frac{1+(1+\log L)^{\nu_*}}{(1+L)^\lambda}
		\longrightarrow0,
	\end{equation}
as $L\rightarrow \infty$.
	To pass from this estimate to global existence, take any increasing deterministic sequence $L_j\to\infty$ with $L_j>U_0$. Relation~\eqref{eq:detail-stopping-validity} implies
	\begin{equation}\label{eq:detail-global-event}
		\{\zeta\le T\}
		\subseteq\bigcap_{j\ge1}\{\rho_{L_j}<\zeta\le T\}
		\subseteq\bigcap_{j\ge1}\{\rho_{L_j}\le T\}.
	\end{equation}
	Therefore
	\[
	\Prob\left(\zeta\le T\right)\le\inf_j\Prob\left(\rho_{L_j}\le T\right)=0,
	\qquad
	\Prob\left(\zeta<\infty\right)\le\sum_{m=1}^\infty\Prob\left(\zeta\le m\right)=0.
	\]
Finally,
	\[
	\Prob\left(\sup_{t\le T}\|u(t)\|_\infty\ge R\right)
	\le\Prob\left(\rho_R\le T\right),
	\]
	and \eqref{eq:rho-tail} proves \eqref{eq:pp-tail}. The regularity and pathwise uniqueness follow from Proposition~\ref{prop:local}.
\end{proof}

\section{The two-dimensional case \texorpdfstring{$r=3/2$}{r=3/2}}\label{sec:critical-2d}
Throughout this section, let $d=2$, $r=3/2$, and $\sigma_0>0$.
 We use the local solution from Proposition~\ref{prop:local} and the stopping times \eqref{eq:local-mass-stops}. If $u_0\equiv0$, uniqueness gives $u\equiv0$ and a global linear signal, so we may exclude this case.

Lemma~\ref{lem:local-mass}, applied with $r=3/2$, gives the estimate
\begin{equation}\label{eq:critical-mass-bound}
	\int_0^\zeta\left(\int_\D u(t,x)^{3/2}\dd x\right)^2\dd t
	=\int_0^\zeta\|u(t)\|_{3/2}^3\dd t<\infty
	\quad\text{almost surely}.
\end{equation}

\subsection{Signal and energy estimates}
Fix $T<\infty$. If $\tau>0$, extend the source by zero after the lifetime:
\[
f_T(t)=\ind_{\{t<T\wedge\zeta\}}u(t),\qquad 0<t<T.
\]
Lemma~\ref{lem:local-mass} gives $f_T\in L^3(0,T;L^{3/2}(\D))$ almost surely.
Define
\begin{equation}\label{eq:mild-v-1}
		\widetilde v(t)=S_\tau(t)v_0+\frac1\tau\int_0^t S_\tau(t-s)f_T(s)\dd s
	\end{equation}

Set $X=L^{3/2}(\D)$ and let $B_X$ be the Neumann realization of
$(I-\Delta)/\tau$ on $X$. Since
$v_0\in C^2(\overline\D)$ and $\partial_\nu v_0=0$,
one has $v_0\in D(B_X)$.
Deterministic maximal regularity, applied separately to each path,
therefore yields
\begin{equation}\label{eq:critical-maximal-regularity}
\|\widetilde v\|_{L^3(0,T;W^{2,3/2})}
\le C_{T,\tau}\left(
\|v_0\|_{W^{2,3/2}}+
\|f_T\|_{L^3(0,T;L^{3/2})}\right).
\end{equation}
Here $\|v_0\|_{W^{2,3/2}}\le C_\D\|v_0\|_{C^2}$.
We refer the readers to the reference \cite{Denk} for details.
Note that $\widetilde v=v$ on $[0,T\wedge\zeta)$. Since $d=2$,
\[
W^{2,3/2}(\D)\hookrightarrow W^{1,6}(\D),
\]
and therefore
\begin{equation}\label{eq:critical-signal-integral}
	\int_0^{T\wedge\zeta}\|\nabla v(t)\|_6^3\dd t<\infty
	\qquad\text{almost surely}.
\end{equation}
When $\tau=0$, the elliptic estimate $\|\nabla Ku\|_6\le C\|u\|_{3/2}$ gives \eqref{eq:critical-signal-integral} directly.

\begin{lemma}[Localized energy estimate]\label{lem:critical-energy}
	For every fixed $p\ge2$, set
	\[
	Y_p=\int_\D u^p\dd x,\qquad
	w=u^{p/2},\qquad
	D_p=\int_\D|\nabla w|^2\dd x.
	\]
	On the stochastic interval $[0, \zeta)$, there are constants $c_p,C_p>0$ such that
	\begin{equation}\label{eq:critical-energy-bound}
		\mathrm dY_p+c_pD_p\dd t
		\le C_p\Gamma(t)Y_p\dd t+\mathrm dN_p(t),
		\qquad
		\Gamma(t)=1+\|u(t)\|_{3/2}^3+\|\nabla v(t)\|_6^3,
	\end{equation}
	where
	\[
	N_p(t)=\sigma_0p\int_0^t\left(\int_\D u(s,x)^{p+1/2}\dd x\right)\dd W_s.
	\]
	For every finite $T$, $\int_0^{T\wedge\zeta}\Gamma(t)\dd t<\infty$ almost surely.
\end{lemma}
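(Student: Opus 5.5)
Apply It\^o's formula to $Y_p=\int_\D u^p\dd x$ on the localized intervals $[0,\zeta_n]$ from \eqref{eq:local-mass-stops}. Before $\zeta_n$ the solution is bounded, lies in $L^2H^1$ and has $\nabla v$ bounded (Proposition~\ref{prop:local}). The formula is justified by smooth convex approximation of $x\mapsto x^p$, exactly as in Step~2 of Proposition~\ref{lem:comparison}. The It\^o differential is
\[
\mathrm dY_p=-p(p-1)\!\int_\D u^{p-2}|\nabla u|^2\dd x\dd t+\chi p(p-1)\!\int_\D u^{p-1}\nabla u\cdot\nabla v\dd x\dd t+\frac{\sigma_0^2p(p-1)}2\!\int_\D u^{p-2+2r}\dd x\dd t+\mathrm dN_p.
\]
With $r=3/2$, the It\^o correction is $\frac{\sigma_0^2p(p-1)}2\int u^{p+1}$, and the martingale term has integrand $\sigma_0p\int u^{p-1}u^{3/2}=\sigma_0p\int u^{p+1/2}$, matching $N_p$. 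The dissipation equals $\frac{4(p-1)}{p}D_p$ since $u^{p-2}|\nabla u|^2=\frac4{p^2}|\nabla w|^2$.

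The chemotactic term is $\chi p(p-1)\int u^{p-1}\nabla u\cdot\nabla v=2\chi(p-1)\int w\nabla w\cdot\nabla v$. By H\"older with exponents $2,6,3$,
\[
2\chi(p-1)\int w\nabla w\cdot\nabla v\le C\|\nabla w\|_2\|\nabla v\|_6\|w\|_3 .
\]
Deliberately, no $\Delta v$ substitution is used, since $\Delta v$ is only controlled in $L^{3/2}$. The It\^o term equals $C\int u^{p+1}=C\|w\|_{2(p+1)/p}^{2(p+1)/p}$.

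The main step is controlling both terms by $\frac{c}{2}D_p+C\Gamma Y_p$ using only $\|w\|_2^2=Y_p$ and the two-dimensional Gagliardo--Nirenberg inequality. For the chemotactic term, GN in $d=2$ gives $\|w\|_3\le C\|w\|_{H^1}^{1/3}\|w\|_2^{2/3}$. Hence
\[
C\|\nabla w\|_2\|\nabla v\|_6\|w\|_3\le C\|\nabla v\|_6\|w\|_{H^1}^{4/3}\|w\|_2^{2/3}.
\]
Young's inequality with exponents $3/2$ and $3$ bounds this by $\delta\|w\|_{H^1}^2+C\|\nabla v\|_6^3Y_p$, and $\|w\|_{H^1}^2=D_p+Y_p$.

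The It\^o term is where I expect the difficulty, because it is superlinear and must be handled by the $L^{3/2}$ mass information. I would first write $\int u^{p+1}=\int w^2\,u$, so that
\[
\int u^{p+1}\le\|w\|_{6}^2\|u\|_{3/2}.
\]
GN gives $\|w\|_6^2\le C\|w\|_{H^1}^{4/3}\|w\|_2^{2/3}$. Young's inequality with exponents $3/2,3$ then yields $\delta\|w\|_{H^1}^2+C\|u\|_{3/2}^3Y_p$. This exactly produces the factor $\|u\|_{3/2}^3$ in $\Gamma$, and that exponent matches the integrability in \eqref{eq:critical-mass-bound}. Choosing $\delta$ small absorbs the $D_p$ contributions, and the $Y_p$ contributions from $\|w\|_{H^1}^2$ go into the constant $1$ in $\Gamma$. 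This gives \eqref{eq:critical-energy-bound} on each $[0,\zeta_n]$, hence on $[0,\zeta)$.

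Finally, $\int_0^{T\wedge\zeta}\Gamma\dd t<\infty$ almost surely follows at once from \eqref{eq:critical-mass-bound} and \eqref{eq:critical-signal-integral}.
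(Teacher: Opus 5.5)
Your proposal is correct and follows essentially the same route as the paper. Both use It\^o's formula for $\int_\D u^p\dd x$ before $\zeta_n$, with dissipation $\frac{4(p-1)}{p}D_p$, and then two-dimensional Gagliardo--Nirenberg plus Young with exponents $3/2,3$ on $\|u\|_{3/2}\|w\|_6^2$ and on $\|\nabla v\|_6\|w\|_3$; the only cosmetic differences are that you use the inhomogeneous $H^1$ form of Gagliardo--Nirenberg and apply H\"older directly to $\int_\D w\nabla w\cdot\nabla v\dd x$ instead of first reducing it to $\int_\D|\nabla v|^2w^2\dd x$.
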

\begin{proof}
	Before $\zeta_n$, the variational regularity from Proposition~\ref{prop:local} justifies the use of It\^o's formula for $\int u^p$.
The resulting identity is
	\begin{align}\label{eq:critical-energy-identity}
		\mathrm dY_p+\frac{4(p-1)}pD_p\dd t
		={}&2\chi(p-1)\int_\D w\nabla w\cdot\nabla v\dd x\dd t\notag\\
		&+\frac{\sigma_0^2p(p-1)}2\int_\D u w^2\dd x\dd t
		+\mathrm dN_p(t).
	\end{align}
	In two dimensions, the Gagliardo--Nirenberg inequalities give
	\begin{align*}
		\|w\|_6^2&\le C\|\nabla w\|_2^{4/3}\|w\|_2^{2/3}+C\|w\|_2^2,\\
		\|w\|_3^2&\le C\|\nabla w\|_2^{2/3}\|w\|_2^{4/3}+C\|w\|_2^2.
	\end{align*}
	Consequently, for every $\varepsilon>0$,
	\begin{align}
		\int_\D u w^2\dd x
		&\le\|u\|_{3/2}\|w\|_6^2\le\varepsilon D_p+
		C_\varepsilon\bigl(\|u\|_{3/2}^3+\|u\|_{3/2}\bigr)Y_p,
		\label{eq:critical-noise-interpolation}\\
		\int_\D|\nabla v|^2 w^2\dd x
		&\le\|\nabla v\|_6^2\|w\|_3^2\le\varepsilon D_p+
		C_\varepsilon\bigl(\|\nabla v\|_6^3+\|\nabla v\|_6^2\bigr)Y_p.
		\label{eq:critical-drift-interpolation}
	\end{align}
	Estimate the first term on the right-hand side of \eqref{eq:critical-energy-identity} by a small multiple of $D_p$ plus a constant times $\int_\D|\nabla v|^2w^2\dd x$. Then apply \eqref{eq:critical-noise-interpolation}--\eqref{eq:critical-drift-interpolation}, choosing their small constants to absorb the remaining gradient terms. The lower powers are bounded by $C\Gamma(t)$. This proves \eqref{eq:critical-energy-bound}; the time-integrability of $\Gamma$ follows from \eqref{eq:critical-mass-bound} and \eqref{eq:critical-signal-integral}.
\end{proof}

\begin{lemma}[$L^p$ bounds before explosion]\label{lem:critical-lp}
	For every fixed $p\ge2$ and $T<\infty$,
	\begin{equation}\label{eq:critical-lp-bound}
		\sup_{t<T\wedge\zeta}\|u(t)\|_p<\infty
		\qquad\text{almost surely}.
	\end{equation}
\end{lemma}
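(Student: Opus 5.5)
The plan is to integrate the differential inequality \eqref{eq:critical-energy-bound} of Lemma~\ref{lem:critical-energy} with a random integrating factor and then control the local martingale pathwise. Set
\[
\Lambda(t)=\exp\Bigl(-C_p\int_0^t\Gamma(s)\dd s\Bigr),\qquad t<T\wedge\zeta .
\]
Lemma~\ref{lem:critical-energy} gives $\int_0^{T\wedge\zeta}\Gamma\dd t<\infty$ almost surely, so $\Lambda$ is a continuous, adapted, finite-variation process bounded below by a positive random constant $\Lambda_*$ on $[0,T\wedge\zeta)$.

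By the product rule (no cross variation, since $\Lambda$ has finite variation), $\widetilde Y:=\Lambda Y_p$ satisfies
\[
\widetilde Y(t)+c_p\int_0^t\Lambda D_p\dd s\le Y_p(0)+\int_0^t\Lambda\dd N_p .
\]
Before each localizing time $\zeta_n$ from \eqref{eq:local-mass-stops}, everything is bounded. I then apply a stochastic Gronwall / Lenglart-type argument. Let $\widetilde N(t)=\int_0^t\Lambda\dd N_p$. Since $\widetilde Y\ge0$, the process $Y_p(0)+\widetilde N$ is a nonnegative continuous local martingale on $[0,T\wedge\zeta)$ (stopped at $\zeta_n$, then let $n\to\infty$). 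A nonnegative local martingale is a supermartingale and converges almost surely as $t\uparrow T\wedge\zeta$. In particular its supremum is finite almost surely, and one can quantify this by Doob's maximal inequality for nonnegative supermartingales:
\[
\Prob\Bigl(\sup_t (Y_p(0)+\widetilde N)\ge K\Bigr)\le Y_p(0)/K .
\]
Hence $\sup_{t<T\wedge\zeta}\widetilde Y(t)<\infty$ almost surely, and dividing by $\Lambda\ge\Lambda_*>0$ gives \eqref{eq:critical-lp-bound}.

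The steps that need care are the following.
\begin{enumerate}[label=(\alph*),leftmargin=*]
\item The inequality \eqref{eq:critical-energy-bound} has to be used in integrated form with a random, possibly unbounded, coefficient $\Gamma$. The integrating factor must be adapted, which holds since $\Gamma$ is built from $u$ and $\nabla v$ up to time $t$. For $\tau>0$, the quantity $\nabla v(t)$ depends only on $u$ on $[0,t]$, so there is no anticipation, even though the maximal-regularity bound \eqref{eq:critical-maximal-regularity} was stated on the whole interval.
\item Nonnegativity of the local martingale has to be justified, and this is exactly where the inequality enters: the martingale part dominates $\widetilde Y\ge0$. A subtle point is that the supermartingale argument is applied to the dominating process $Y_p(0)+\widetilde N(t)$, not to $\widetilde Y$ itself, which is why only an upper bound on $\widetilde Y$ is needed.
\item The passage from the stopped times $\zeta_n$ to $T\wedge\zeta$ goes through monotone limits of the suprema, using $\zeta_n\uparrow\zeta$.
\end{enumerate}

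I expect the main obstacle to be the rigorous handling of the random Gronwall factor together with a local martingale whose quadratic variation, involving $\int u^{p+1/2}$, is not a priori controlled. The nonnegative-supermartingale trick avoids any need to estimate $\langle N_p\rangle$ directly, and this is why I would try it first. If it fails, the fallback is the stochastic Gronwall lemma of Scheutzow type: for $0<s<1$, bound $\E\sup(\Lambda Y_p)^s$ by a multiple of $Y_p(0)^s$, which yields the same almost-sure conclusion.
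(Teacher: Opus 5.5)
Your proposal is correct and is essentially the paper's argument: discount $Y_p$ by $\exp\bigl(-C_p\int_0^{t}\Gamma(s)\dd s\bigr)$, use the energy inequality and nonnegativity to get a nonnegative supermartingale on $[0,T\wedge\zeta_n]$, apply the maximal inequality, and let $n\to\infty$. The differences are cosmetic. You apply Doob's inequality to the dominating local martingale $Y_p(0)+\widetilde N$, whereas the paper applies it to the discounted process $\Lambda Y_p$ itself. You also remove the random discount through the pathwise lower bound $\Lambda_*>0$, whereas the paper intersects with $\{\int_0^{T\wedge\zeta}\Gamma\dd s\le K\}$ and takes a union over integer $K$.
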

\begin{proof}
	For the stopping times \(\zeta_n\) in \eqref{eq:local-mass-stops}, define on \([0,T]\)
	\[
	Z_{p,n}(t)=
	\exp\left(-C_p\int_0^{t\wedge\zeta_n}\Gamma(s)\dd s\right)
	Y_p(t\wedge\zeta_n).
	\]
	By \eqref{eq:critical-energy-bound}, \(Z_{p,n}\) is a nonnegative
	local supermartingale and hence a supermartingale. Its maximal
	inequality gives, for \(K,R>0\),
	\[
	\Prob\left(
	\sup_{t<T\wedge\zeta_n}Y_p(t)>R,
	\ \int_0^{T\wedge\zeta}\Gamma(s)\dd s\le K
	\right)
	\le
	\Prob\left(\sup_{t\le T}Z_{p,n}(t)>Re^{-C_pK}\right)
	\le\frac{e^{C_pK}Y_p(0)}{R}.
	\]
	Letting \(n\to\infty\) yields
	\begin{equation}\label{eq:critical-lp-probability}
		\Prob\left(
		\sup_{t<T\wedge\zeta}Y_p(t)>R,
		\ \int_0^{T\wedge\zeta}\Gamma(s)\dd s\le K
		\right)
		\le\frac{e^{C_pK}Y_p(0)}{R}.
	\end{equation}
	First let \(R\to\infty\) for each fixed positive integer \(K\).
	Taking the union over these integers and using
	\(\int_0^{T\wedge\zeta}\Gamma(s)\dd s<\infty\) almost surely
	proves \eqref{eq:critical-lp-bound}.
\end{proof}

\subsection{Continuation}
\begin{proof}[Proof of Theorem~\ref{thm:critical}\ref{item:critical-2d}]
For the fixed exponent $q>8$ from Section~\ref{sec:setting},
set $P=3q/2>12$.
For every $T<\infty$, Lemma~\ref{lem:critical-lp} gives
\[
\sup_{t<T\wedge\zeta}\|u(t)\|_{L^P}<\infty
\qquad\text{almost surely}.
\]
Since $r=3/2$, this exponent equals $rq$. Proposition~\ref{prop:lp-continuation} gives
$\Prob(\zeta\le T)=0$ for every $T>0$.
Hence $\zeta=\infty$ almost surely.
Pathwise uniqueness and nonnegativity follow from
Proposition~\ref{prop:local}.
For $\tau>0$, the same proposition gives
$v\in C^1([0,T];C(\overline\D))$ almost surely on every finite
interval, which proves the regularity.
\end{proof}


\section{Bounds and asymptotic extinction}\label{sec:mass-proof}
\subsection{Finite-time bounds}
\begin{proof}[Proof of Corollary~\ref{cor:tight}]
	With $\eta=\|(v_t)_-\|_\infty$, the event inclusion is
	\[
	\{A_R(T)>K\}
	\subseteq\{\rho_L\le T\}
	\cup\left\{\int_0^{T\wedge\rho_L}\eta(t)\dd t>K\right\}.
	\]
	Indeed, on $\{\rho_L>T\}$,  $\int_0^{T\wedge\rho_L}\eta(t)\dd t=\int_0^{T}\eta(t)\dd t \geq A_R(T)$. H\"older's inequality and Lemma~\ref{lem:a} give
	\begin{align*}
		\E\left(\int_0^{T\wedge\rho_L}\eta(t)\dd t\right)^p
		&\le\frac{T^{p-1}}{(\chi\tau)^p}
		\E\int_0^{T\wedge\rho_L}a(t)^p\dd t\\
		&\le C_T[1+L^p(1+\log L)^p].
	\end{align*}
	Markov's inequality and \eqref{eq:rho-tail} prove \eqref{eq:tight-rate}. First let $K\to\infty$ at fixed $L$, and then let $L\to\infty$, to obtain \eqref{eq:tight}.
\end{proof}

\begin{proof}[Proof of Corollary~\ref{cor:sup-moments}]
	Choose $\lambda\in(s,1)$ and set $X_T=\sup_{0\le t\le T}\|u(t)\|_\infty$.
	For a sufficiently large deterministic $R_0>1$, tail integration yields
	\begin{align*}
		\E X_T^s
		&=s\int_0^\infty R^{s-1}\Prob\left(X_T>R\right)\dd R\\
		&\le R_0^s+C_T\int_{R_0}^\infty
		R^{s-1-\lambda}\bigl[1+(1+\log R)^{\nu_*}\bigr]\dd R<\infty.
	\end{align*}
\end{proof}

\subsection{Almost-sure extinction}

\begin{proof}[Proof of Theorem~\ref{thm:mass}]
	By Lemma~\ref{lem:local-mass} and global existence, $M(t)=\int_\D u(t)$ is a nonnegative continuous local martingale and
	\begin{equation}\label{eq:mass-bracket}
		[M]_\infty=\sigma_0^2\int_0^\infty\left(\int_\D u(t,x)^r\dd x\right)^2\dd t<\infty
		\quad\text{almost surely}.
	\end{equation}	
	The martingale convergence theorem gives $M(t)\to M_\infty\in[0,\infty)$ almost surely. Jensen's inequality implies
	\[
	\int_\D u(t,x)^r\dd x\ge |\D|^{1-r}M(t)^r,
	\qquad
	\int_0^\infty M(t)^{2r}\dd t
	\le\frac{|\D|^{2r-2}}{\sigma_0^2}[M]_\infty<\infty.
	\]
	If $M_\infty>0$, the last integral would diverge. Thus $M_\infty=0$ and \eqref{eq:mass-integral} holds.
	
	For $M_0>0$, the maximal inequality gives
	$\Prob\{\sup_{t\ge0}M(t)>h\}\le\min\{1,M_0/h\}$. Tail integration yields, for $0<s<1$,
	\[
	\E\sup_{t\ge0}M(t)^s
	\le s\int_0^{M_0}h^{s-1}\dd h+sM_0\int_{M_0}^\infty h^{s-2}\dd h
	=\frac{M_0^s}{1-s}.
	\]
	If $M_0=0$, nonnegativity implies $M\equiv0$. In either case, dominated convergence gives $\E M(t)^s\to0$.
	
	Finally, put $N(t)=\int_\D v(t)=\|v(t)\|_1$. For $\tau=0$, integration of the elliptic equation gives $N=M$. For $\tau>0$,
	\begin{equation}\label{eq:detail-signal-mass}
		\tau N'+N=M,\qquad
		N(t)=e^{-t/\tau}N(0)+\tau^{-1}\int_0^t e^{-(t-s)/\tau}M(s)\dd s.
	\end{equation}
	For $t\ge t_0$, splitting at $t_0$ bounds the right-hand side by
	\[
	e^{-t/\tau}N(0)+e^{-(t-t_0)/\tau}\sup_{s\le t_0}M(s)+\sup_{s\ge t_0}M(s).
	\]
	Letting $t\to\infty$ and then $t_0\to\infty$ proves $N(t)\to0$. Moreover,
	\[
	\E\sup_{t\ge0}N(t)^s\le N(0)^s+\E\sup_{t\ge0}M(t)^s<\infty,
	\]
	so dominated convergence also gives $\E N(t)^s\to0$.
\end{proof}


\subsection{Algebraic decay}
\begin{proof}[Proof of Theorem~\ref{thm:mass-decay-rate}]
The mass is a nonnegative continuous local martingale. By
H\"older's inequality,
\[
\frac{\dd[M]_t}{\dd t}
=\sigma_0^2\left(\int_\D u(t,x)^r\dd x\right)^2
\ge a^2M(t)^{\gamma+2},
\qquad a^2=\sigma_0^2|\D|^{-\gamma}.
\]
Assume first that $M_0>0$. For $h\ge0$ and $x>0$, define
\[
F(h,x)=x^s(1+k_shx^\gamma)^{-s/\gamma}.
\]
Direct differentiation gives $F_{xx}<0$ and
\[
\frac{a^2}{2}x^{\gamma+2}F_{xx}(h,x)-F_h(h,x)
=-\frac{a^2s}{2}(\gamma+s)k_sh\,
x^{s+2\gamma}(1+k_shx^\gamma)^{-s/\gamma-2}\le0.
\]
Thus the drift in It\^o's formula for $F(T-t,M(t))$ is
nonpositive, for every fixed $T>0$.

To justify expectations and the behavior at zero, take integers
$n>\max\{M_0,M_0^{-1}\}$ and set
\[
\theta_n=T\wedge\inf\bigl\{t\ge0:
M(t)\notin(n^{-1},n)\ \text{or}\ [M]_t\ge n\bigr\}.
\]
On this stopped interval, $F_x$ is bounded and $[M]_{\theta_n}\le n$,
so the stochastic integral is square integrable. Consequently,
\[
\E F(T-\theta_n,M(\theta_n))\le F(T,M_0).
\]
A nonnegative continuous local martingale is a supermartingale
and is absorbed upon hitting zero. On paths that do not hit zero
by $T$, continuity and finiteness of $[M]_T$ give $\theta_n=T$
for all sufficiently large $n$. On paths that hit zero by $T$,
the upper and quadratic-variation stops eventually disappear,
and
\[
0\le F(T-\theta_n,M(\theta_n))\le n^{-s}\longrightarrow0=M(T)^s.
\]
Fatou's lemma therefore proves \eqref{eq:mass-decay-rate}.

The conditional maximal inequality, first on finite time
intervals and then by monotone convergence, gives
\[
\Prob\left(\sup_{\ell\ge t}M(\ell)>b\mid\mathcal F_t\right)
\le\min\{1,M(t)/b\},\qquad b>0.
\]
Conditional tail integration yields
\[
\E\left[\sup_{\ell\ge t}M(\ell)^s\mid\mathcal F_t\right]
\le\frac{M(t)^s}{1-s}.
\]
Taking expectations proves \eqref{eq:future-mass-supremum}.

For $\tau=0$, integration of the signal equation gives $N=M$.
For $\tau>0$, its mass formula is
\[
N(t)=e^{-t/\tau}N_0+\frac1\tau\int_0^t e^{-(t-\ell)/\tau}M(\ell)\dd\ell.
\]
Splitting the integral at $t/2$ gives
\[
N(t)\le e^{-t/(2\tau)}
\left(N_0+\sup_{0\le\ell\le t/2}M(\ell)\right)
+\sup_{\ell\ge t/2}M(\ell).
\]
Subadditivity of $x\mapsto x^s$, the maximal bound at time zero,
and \eqref{eq:future-mass-supremum} at time $t/2$ prove
\eqref{eq:signal-decay-rate} and the polynomial bound.
If $M_0=0$, the supermartingale property gives $M\equiv0$,
and the remaining assertions follow from the signal mass equation.
\end{proof}

\section*{Acknowledgments}


\section*{Funding}
This work is partially supported by the National Key R\&D Program of China (No. 2022YFA1006001), the National Natural Science Foundation of China (Nos. 12131019, 12571158, 12371151) and the Postdoctoral Fellowship Program (Grade C) of the China Postdoctoral Science Foundation (No. GZC20261645)

\section*{Disclosure statement}
The authors report there are no competing interests to declare.

\section*{Data availability statement}
No datasets were generated or analyzed during the current study.

\appendix
\section{Stochastic convolution}\label{app:stoch}
\begin{proof}[Proof of Lemma~\ref{lem:stoch}]
	Choose $\theta=3/8$ and $\sigma=1/2$. The assumptions on $q$ ensure
	\begin{equation}\label{eq:local-factor-gaps}
		2\theta<1,\qquad\theta-\frac\sigma2-\frac1q>0,
		\qquad\sigma-\frac dq>\alpha.
	\end{equation}
	For $Y_\theta(s)=\int_0^s(s-r)^{-\theta}S(s-r)g(r)\dd W_r$, the $L^q$-valued Burkholder--Davis--Gundy inequality \cite{NVW} and boundedness of $S(t)$ on $L^q$ yield
	\begin{align}
		\E\|Y_\theta(s)\|_{L^q}^q
		&\le C_q\E\left(\int_0^s(s-r)^{-2\theta}
		\|S(s-r)g(r)\|_{L^q}^2\dd r\right)^{q/2}\notag\\
		&\le C_q\E\left(\int_0^s(s-r)^{-2\theta}
		\|g(r)\|_{L^q}^2\dd r\right)^{q/2}.\label{eq:local-inner-bdg}
	\end{align}
	Set
	\[
	k_\theta(h)=\ind_{(0,T)}(h)h^{-2\theta},
	\qquad G(r)=\ind_{(0,T)}(r)\|g(r)\|_{L^q}^2.
	\]
	Young's inequality in time gives
	\begin{align}
		\E\int_0^T\|Y_\theta(s)\|_{L^q}^q\dd s
		&\le C_q\E\|k_\theta*G\|_{L^{q/2}(0,T)}^{q/2}\notag\\
		&\le C_q\|k_\theta\|_{L^1(0,T)}^{q/2}
		\E\int_0^T\|g(r)\|_{L^q}^q\dd r\notag\\
		&\le \frac{C_qT^{(1/2-\theta)q}}{(1-2\theta)^{q/2}}
		\E\int_0^T\|g(r)\|_{L^q}^q\dd r.\label{eq:local-inner-time}
	\end{align}
	
	The factorization formula \cite{DPZ} reads
	\[
	Z(t)=\frac{\sin(\pi\theta)}\pi\int_0^t(t-s)^{\theta-1}S(t-s)Y_\theta(s)\dd s.
	\]
	Semigroup smoothing into $H^{\sigma,q}$ and H\"older's inequality imply
	\begin{align}
		\|Z(t)\|_{H^{\sigma,q}}
		&\le C_T\int_0^t(t-s)^{\theta-1-\sigma/2}\|Y_\theta(s)\|_{L^q}\dd s\notag\\
		&\le C_T\left(\int_0^t h^{(\theta-1-\sigma/2)q'}\dd h\right)^{1/q'}
		\left(\int_0^t\|Y_\theta(s)\|_{L^q}^q\dd s\right)^{1/q}\notag\\
		&\le C_TT^{\theta-\sigma/2-1/q}
		\left(\int_0^T\|Y_\theta(s)\|_{L^q}^q\dd s\right)^{1/q}.
		\label{eq:local-outer-factor}
	\end{align}
Combining \eqref{eq:local-inner-time} and \eqref{eq:local-outer-factor}, we obtain
	\begin{align}
		\E\sup_{t\le T}\|Z(t)\|_{H^{\sigma,q}}^q
		&\le C_TT^{(\theta-\sigma/2)q-1}
		T^{(1/2-\theta)q}\E\int_0^T\|g(s)\|_{L^q}^q\dd s\notag\\
		&=C_TT^{(1/2-\sigma/2)q-1}
		\E\int_0^T\|g(s)\|_{L^q}^q\dd s.\label{eq:local-stoch-sobolev}
	\end{align}
	Since $H^{1/2,q}(\D)\hookrightarrow C^\alpha(\overline\D)$ for $1/2-d/q>\alpha$ \cite{AdamsFournier}, this proves \eqref{eq:local-stoch-general}. For bounded $g$,
	\[
	\E\int_0^T\|g(s)\|_{L^q}^q\dd s
	\le T|\D|M^q,
	\]
	which gives the remaining estimates.
	Finally, let
	\[
	R_\theta f(t)=\int_0^t(t-s)^{\theta-1}S(t-s)f(s)\dd s.
	\]
	Estimate~\eqref{eq:local-outer-factor} makes $R_\theta$ a bounded
	map from $L^q(0,T;L^q)$ into $L^\infty(0,T;H^{1/2,q})$.
	For a step function $f$, its image is continuous in
	$H^{1/2,q}$, since the relevant kernel is integrable at zero.
	Density of step functions and the same estimate then give
	$R_\theta f\in C([0,T];H^{1/2,q})$ for every
	$f\in L^q(0,T;L^q)$.
	Applying this pathwise to $Y_\theta$ and using the factorization
	formula proves the asserted continuity in $C^\alpha$.
	For $0<T\le1$, the semigroup constants can be chosen uniformly,
	which gives the stated small-time estimate with a constant
	independent of $T$.
\end{proof}

\section{Local solution construction}\label{app:local}
\begin{proof}[Proof of Proposition~\ref{prop:local}]
	\emph{Step 1: truncation.}
	For $N>0$, choose a smooth function $\phi_N$ such that
	\[
	\phi_N(z)=z\quad(|z|\le N),\qquad
	z\phi_N(z)\ge0,\qquad
	\|\phi_N\|_\infty+\operatorname{Lip}(\phi_N)<\infty.
	\]
	Set
	\[
	\sigma_N(z)=\sigma_0(\phi_N(z)_+)^r.
	\]
	Then $\sigma_N$ is bounded and globally Lipschitz, agrees with $\sigma_0(z_+)^r$ on $[-N,N]$, and vanishes for $z\le0$. For a continuous adapted $C$-valued process $w$, define
	\[
	v_N[w](t)=
	\begin{cases}
		K\phi_N(w(t)),&\tau=0,\\[2pt]
		S_\tau(t)v_0+\displaystyle\frac1\tau\int_0^tS_\tau(t-s)\phi_N(w(s))\dd s,&\tau>0.
	\end{cases}
	\]
	Elliptic regularity or \eqref{eq:heat-grad} gives
	\begin{align}
		\sup_{t\le T}\|\nabla v_N[w](t)\|_\infty&\le C_{N,T},\notag\\
		\sup_{t\le T}\|\nabla(v_N[w_1]-v_N[w_2])(t)\|_\infty
		&\le C_{N,T}\sup_{t\le T}\|w_1(t)-w_2(t)\|_\infty.
		\label{eq:local-signal-lipschitz}
	\end{align}
The initial gradient is uniformly bounded because
$v_0\in C^2(\overline\D)$ and $\partial_\nu v_0=0$.
For the integral term, the relevant time integral is
$\int_0^t(t-s)^{-1/2}\dd s=2\sqrt t$.
	
	Writing $F_N[w]=\phi_N(w)\nabla v_N[w]$, we have
	\begin{align*}
		F_N[w_1]-F_N[w_2]
		&=(\phi_N(w_1)-\phi_N(w_2))\nabla v_N[w_1]\\
		&\quad+\phi_N(w_2)\nabla(v_N[w_1]-v_N[w_2]),
	\end{align*}
	and hence
	\begin{align}
		\sup_{t\le T}\|F_N[w](t)\|_\infty&\le C_{N,T},\notag\\
		\sup_{t\le T}\|F_N[w_1](t)-F_N[w_2](t)\|_\infty
		&\le C_{N,T}\sup_{t\le T}\|w_1(t)-w_2(t)\|_\infty.
		\label{eq:local-flux-lipschitz}
	\end{align}
	
\emph{Step 2: contraction.}
On the Banach space of adapted continuous $C$-valued processes with norm
\[
\|w\|_{\mathcal X_h}
=\left(\E\sup_{t\le h}\|w(t)\|_\infty^q\right)^{1/q},
\]
consider
\begin{align*}
\mathcal T_Nw(t)
=S(t)u_0-\chi\int_0^tS(t-s)\diver F_N[w](s)\dd s
+\int_0^tS(t-s)\sigma_N(w(s))\dd W_s.
\end{align*}
Since $u_0\in C$, strong continuity and boundedness of the Neumann
heat semigroup give $S(\cdot)u_0\in C([0,h];C)$ and
$\sup_{t\le h}\|S(t)u_0\|_\infty\le\|u_0\|_\infty$.
The bounded flux and noise coefficients imply that the two
convolutions also have continuous $C$-valued paths starting at zero;
the deterministic convolution uses the integrable kernel
$(t-s)^{-1/2}$, and the stochastic convolution uses
Lemma~\ref{lem:stoch}. Thus $\mathcal T_N$ maps this space into itself.
	The deterministic difference satisfies
	\begin{align*}
		\left\|\int_0^\cdot S(\cdot-s)\diver(F_N[w_1]-F_N[w_2])(s)\dd s\right\|_{\mathcal X_h}
		&\le C_{N,T}\int_0^h(h-s)^{-1/2}\dd s\,\|w_1-w_2\|_{\mathcal X_h}\\
		&\le C_{N,T}h^{1/2}\|w_1-w_2\|_{\mathcal X_h}.
	\end{align*}
	For $H=\sigma_N(w_1)-\sigma_N(w_2)$, \eqref{eq:local-stoch-general} gives
	\begin{align}\label{eq:stoch-difference}
		\left(\E\sup_{t\le h}\left\|\int_0^tS(t-s)H(s)\dd W_s\right\|_{C^\alpha}^q\right)^{1/q}
		&\le Ch^{1/4}\left(\E\sup_{s\le h}\|H(s)\|_\infty^q\right)^{1/q}\notag\\
		&\le C_Nh^{1/4}\|w_1-w_2\|_{\mathcal X_h}.
	\end{align}
	Therefore
	\begin{equation}\label{eq:local-contraction}
		\|\mathcal T_Nw_1-\mathcal T_Nw_2\|_{\mathcal X_h}
		\le C_{N,T}(h^{1/2}+h^{1/4})\|w_1-w_2\|_{\mathcal X_h}.
	\end{equation}
	Choose $h\le1$ so that the coefficient is smaller than one. The fixed point is unique on $[0,h]$.
As \eqref{eq:local-signal-lipschitz}--\eqref{eq:local-contraction} retain constants depending only on $N,T$, iteration thus constructs the truncated solution on every finite interval.
	
		\emph{Step 3: energy and positivity.}
	The variational It\^o formula gives the energy identity
	\begin{equation}\label{eq:local-energy-identity}
		\begin{aligned}
			&\|u_N(t)\|_2^2+2\int_0^t\|\nabla u_N(s)\|_2^2\dd s\\
			&\quad=\|u_0\|_2^2+2\chi\int_0^t\langle F_N(s),\nabla u_N(s)\rangle\dd s\\
			&\qquad+\int_0^t\|\sigma_N(u_N(s))\|_2^2\dd s
			+2\int_0^t\langle u_N(s),\sigma_N(u_N(s))\rangle\dd W_s.
		\end{aligned}
	\end{equation}
	Here $F_N=F_N[u_N]$. See \cite{KrylovIto} for the variational formula and smooth approximation method. The Neumann weak form \eqref{eq:local-weak-divergence} accounts for the boundary terms.
	
	Taking expectations yields
	\begin{equation}\label{eq:truncated-energy}
		\E\|u_N(t)\|_2^2+\E\int_0^t\|\nabla u_N(s)\|_2^2\dd s
		\le\|u_0\|_2^2+C_{N,T},\qquad t\le T.
	\end{equation}
	In particular, $u_N\in L^2(\Omega\times(0,T);H^1)$.
	On $\{u_N<0\}$,
	\[
	\sigma_N(u_N)=0,\qquad
	|\phi_N(u_N)|\le\operatorname{Lip}(\phi_N)(u_N)_-.
	\]
	It follows that
	\[
	\begin{aligned}
		&\|(u_N(t))_-\|_2^2+2\int_0^t\|\nabla(u_N(s))_-\|_2^2\dd s\\
		&\quad=-2\chi\int_0^t\langle F_N(s),\nabla(u_N(s))_-\rangle\dd s\\
		&\quad\le\int_0^t\|\nabla(u_N(s))_-\|_2^2\dd s
		+C_{N,T}\int_0^t\|(u_N(s))_-\|_2^2\dd s.
	\end{aligned}
	\]
	The initial negative part is zero and the stochastic terms vanish. Gronwall's inequality gives
	\[
	\E\|(u_N(t))_-\|_2^2=0,
	\qquad u_N\ge0.
	\]
	Because $\phi_N(u_N)\ge0$, positivity of $K$ or $S_\tau(t)$ also gives $v_N\ge0$.

	\emph{Step 4: consistency and continuation.}
	Define
	\[
	\zeta_N=\inf\{t\ge0:\|u_N(t)\|_\infty\ge N\}.
	\]
	For $M>N>\|u_0\|_\infty$, uniqueness and agreement of the coefficients on $[0,N]$ imply
	\[
	(u_M,v_M)=(u_N,v_N)\quad\text{on }[0,\zeta_N],
	\qquad\zeta_N\le\zeta_M.
	\]
	Thus $\zeta=\lim_N\zeta_N$ and $u=u_N$ on $[0,\zeta_N)$ define a maximal local solution. If its density stayed bounded by $N$ before a finite lifetime, it would agree there with the globally constructed truncation at a larger level; continuity of that truncation would extend it beyond the lifetime. Equivalently,
	\[
	\zeta<\infty\quad\Longrightarrow\quad
	\sup_{t<\zeta}\|u(t)\|_\infty=\infty,
	\]
	which is \eqref{eq:alternative}. The stopped identities in Definition~\ref{def:local-solution} follow by choosing a strictly interior localizing sequence.

	\emph{Step 5: spatial and signal regularity.}
	Fix $T>0$ and an integer $n>\|u_0\|_\infty$.
	Set $\vartheta=T\wedge\zeta_n$, where $\zeta_n$ is defined
	in \eqref{eq:local-mass-stops}.
	Then $\vartheta<\zeta$ and
	$\sup_{t\le\vartheta}\|u(t)\|_\infty\le n$ almost surely.
	 The mild formula gives, for $t\in [0,\vartheta]$,
	\begin{align*}
		\|u(t)-S(t)u_0\|_{C^\alpha}
		&\le C_T\int_0^t(t-s)^{-(1+\alpha)/2}\|u(s)\nabla v(s)\|_\infty\dd s\\
		&\quad+\sigma_0\left\|\int_0^tS(t-s)u(s)^r\dd W_s\right\|_{C^\alpha}.
	\end{align*}
	The deterministic kernel is integrable because $(1+\alpha)/2<1$, and the stochastic term is controlled by Lemma~\ref{lem:stoch}.
	Choose $\alpha<\beta_0<1/2-d/q$.
The same estimates give bounded $C^{\beta_0}$ norms for $u-S(\cdot)u_0$ on each localized finite interval. Together with the already established
	$C$-valued time continuity and the interpolation inequality
	\[
	\|f\|_{C^\alpha}\le C\|f\|_\infty^{1-\alpha/\beta_0}\|f\|_{C^{\beta_0}}^{\alpha/\beta_0}.
	\]
	this yields that $u-S(\cdot)u_0\in C([0,\vartheta];C^\alpha)$.
	
	For $\tau>0$, \eqref{eq:signal-derivative} gives, for $t>0$,
	\[
	v_t(t)=-S_\tau(t)Bv_0+\frac{u(t)}\tau
	-\frac1\tau\int_0^tBS_\tau(t-s)u(s)\dd s.
	\]
	For fixed $h>0$, decompose
	$S(t)u_0=S(t)(u_0-S(h)u_0)+S(t+h)u_0$.
	Heat smoothing gives
	\[
	\limsup_{t\downarrow0}t^{\alpha/2}\|S(t)u_0\|_{C^\alpha}
	\le C_T\|u_0-S(h)u_0\|_\infty\longrightarrow0
	\qquad\text{as }h\downarrow0.
	\]
	Since $u-S(\cdot)u_0\in C([0,\vartheta];C^\alpha)$, it follows that
	\[
	\varepsilon(t):=\sup_{0<s\le t\wedge\vartheta}
	s^{\alpha/2}\|u(s)\|_{C^\alpha}\longrightarrow0
	\qquad\text{as }t\downarrow0.
	\]
	Using \eqref{eq:heat-Balpha}, for $0<t\le\vartheta$ we obtain
	\[
	\begin{aligned}
		\left\|\int_0^tBS_\tau(t-s)u(s)\dd s\right\|_\infty
		&\le C_T\varepsilon(t)
		\int_0^t(t-s)^{-1+\alpha/2}s^{-\alpha/2}\dd s\\
		&=C_T\varepsilon(t)\mathrm B(\alpha/2,1-\alpha/2)
		\longrightarrow0\qquad\text{as }t\downarrow0.
	\end{aligned}
	\]
	Here $\mathrm B$ denotes the beta function. The same
	integrable-kernel estimate, together with the local
	$C^\alpha$-valued continuity of $u$ for positive times,
	gives continuity of $v_t$ on compact subintervals of
	$(0,\vartheta]$. Strong continuity of $S_\tau$ and continuity
	of $u$ at zero then yield $v_t(0)=-Bv_0+u_0/\tau$ and
	$v\in C^1([0,\vartheta];C)$. Consequently,
	$\Delta v=\tau v_t+v-u$ is continuous. When $\tau=0$,
	the corresponding conclusion follows directly from
	$\Delta v=v-u$. These regularity properties justify the
	localized calculations used in the main proofs.
\end{proof}

\end{document}